\title[Rigid analytic $p$-adic Simpson correspondence for line bundles ]{ Rigid analytic $p$-adic Simpson correspondence for line bundles}
\author[Ziyan Song]{Ziyan Song}
\email{ziyans@mail.ustc.edu.cn}
\address{School of Mathematical Sciences,
University of Science and Technology of China, Hefei, 230026, China}
\begin{document}
\theoremstyle{plain}
\newtheorem{thm}{Theorem}[subsection]
\newtheorem{theorem}[thm]{Theorem}
\newtheorem*{theorem*}{Theorem}
\newtheorem*{definition*}{Definition}
\newtheorem{lemma}[thm]{Lemma}
\newtheorem{sublemma}[thm]{Sublemma}
\newtheorem{corollary}[thm]{Corollary}
\newtheorem*{corollary*}{Corollary}
\newtheorem{proposition}[thm]{Proposition}
\newtheorem{addendum}[thm]{Addendum}
\newtheorem{variant}[thm]{Variant}
\newtheorem{conjecture}[thm]{Conjecture}
\theoremstyle{definition}
\newtheorem{construction}[thm]{Construction}
\newtheorem{notations}[thm]{Notations}
\newtheorem{question}[thm]{Question}
\newtheorem{problem}[thm]{Problem}
\newtheorem{remark}[thm]{Remark}
\newtheorem{remarks}[thm]{Remarks}
\newtheorem{definition}[thm]{Definition}
\newtheorem{claim}[thm]{Claim}
\newtheorem{assumption}[thm]{Assumption}
\newtheorem{assumptions}[thm]{Assumptions}
\newtheorem{properties}[thm]{Properties}
\newtheorem{example}[thm]{Example}
\numberwithin{equation}{subsection}

\newcommand{\sA}{{\mathcal A}}
\newcommand{\sB}{{\mathcal B}}
\newcommand{\sC}{{\mathcal C}}
\newcommand{\sD}{{\mathcal D}}
\newcommand{\sE}{{\mathcal E}}
\newcommand{\sF}{{\mathcal F}}
\newcommand{\sG}{{\mathcal G}}
\newcommand{\sH}{{\mathcal H}}
\newcommand{\sI}{{\mathcal I}}
\newcommand{\sJ}{{\mathcal J}}
\newcommand{\sK}{{\mathcal K}}
\newcommand{\sL}{{\mathcal L}}
\newcommand{\sM}{{\mathcal M}}
\newcommand{\sN}{{\mathcal N}}
\newcommand{\sO}{{\mathcal O}}
\newcommand{\sP}{{\mathcal P}}
\newcommand{\sQ}{{\mathcal Q}}
\newcommand{\sR}{{\mathcal R}}
\newcommand{\sS}{{\mathcal S}}
\newcommand{\sT}{{\mathcal T}}
\newcommand{\sU}{{\mathcal U}}
\newcommand{\sV}{{\mathcal V}}
\newcommand{\sW}{{\mathcal W}}
\newcommand{\sX}{{\mathcal X}}
\newcommand{\sY}{{\mathcal Y}}
\newcommand{\sZ}{{\mathcal Z}}
\newcommand{\A}{{\mathbb A}}
\newcommand{\B}{{\mathbb B}}
\newcommand{\C}{{\mathbb C}}
\newcommand{\D}{{\mathbb D}}
\newcommand{\E}{{\mathbb E}}
\newcommand{\F}{{\mathbb F}}
\newcommand{\G}{{\mathbb G}}
\newcommand{\HH}{{\mathbb H}}
\newcommand{\I}{{\mathbb I}}
\newcommand{\J}{{\mathbb J}}
\renewcommand{\L}{{\mathbb L}}
\newcommand{\M}{{\mathbb M}}
\newcommand{\N}{{\mathbb N}}
\renewcommand{\P}{{\mathbb P}}
\newcommand{\Q}{{\mathbb Q}}
\newcommand{\R}{{\mathbb R}}
\newcommand{\SSS}{{\mathbb S}}
\newcommand{\T}{{\mathbb T}}
\newcommand{\U}{{\mathbb U}}
\newcommand{\V}{{\mathbb V}}
\newcommand{\W}{{\mathbb W}}
\newcommand{\X}{{\mathbb X}}
\newcommand{\Y}{{\mathbb Y}}
\newcommand{\Z}{{\mathbb Z}}
\newcommand{\id}{{\rm id}}
\newcommand{\rank}{{\rm rank}}
\newcommand{\END}{{\mathbb E}{\rm nd}}
\newcommand{\End}{{\rm End}}
\newcommand{\Hom}{{\rm Hom}}
\newcommand{\Hg}{{\rm Hg}}
\newcommand{\tr}{{\rm tr}}
\newcommand{\Sl}{{\rm Sl}}
\newcommand{\Gl}{{\rm Gl}}
\newcommand{\Cor}{{\rm Cor}}
\newcommand{\Aut}{\mathrm{Aut}}
\newcommand{\Sym}{\mathrm{Sym}}
\newcommand{\ModuliCY}{\mathfrak{M}_{CY}}
\newcommand{\HyperCY}{\mathfrak{H}_{CY}}
\newcommand{\ModuliAR}{\mathfrak{M}_{AR}}
\newcommand{\Modulione}{\mathfrak{M}_{1,n+3}}
\newcommand{\Modulin}{\mathfrak{M}_{n,n+3}}
\newcommand{\Gal}{\mathrm{Gal}}
\newcommand{\Spec}{\mathrm{Spec}}
\newcommand{\res}{\mathrm{res}}
\newcommand{\coker}{\mathrm{coker}}
\newcommand{\Jac}{\mathrm{Jac}}
\newcommand{\HIG}{\mathrm{HIG}}
\newcommand{\MIC}{\mathrm{MIC}}

\maketitle

\begin{abstract}
The $p$-adic Simpson correspondence due to Faltings\cite{Faltings 2005} is a $p$-adic analogue of non-abelian Hodge theory. The following is the main result of this article: The correspondence for line bundles can be enhanced to a rigid analytic morphism of moduli spaces under certain smallness conditions. In the complex setting, Simpson shows that there is a complex analytic morphism from the moduli space for the vector bundles with integrable connection to the moduli space of representations of a finitely generated group as algebraic varieties. We give a $p$-adic analogue of Simpson's result.

\textbf{Keywords} Arithmetic algebraic geometry, $p$-adic Hodge theory, rigid geometry, Higgs bundles.

\textbf{Mathematical Subject Classificication} 14G22, 14H60, 14F35

\end{abstract}

\tableofcontents
\section{Introduction}
  In the complex case, Simpson correspondence generalizes the classical Narasimhan-Sashadri correspondence on Riemann surface in the case of vanishing Higgs field. To be more precise, let $X$ be a compact Kahler manifold. A Higgs bundle is a pair consisting of a holomorphic vector bundle $E$ and a holomorphic map $\theta: E \rightarrow E \otimes \Omega_{X}^{1}$ such that $\theta \wedge \theta=0$. Simpson established a one-to-one correspondence between irreducible representations of $\pi_{1}(X)$ and 
stable Higgs bundles with vanishing Chern classes (See \cite{Simpson 1992} for more details). 

  Faltings developed a $p$-adic analogue of Simpson correspondence in the case of curves. The main theorem asserts that there exists an equivalence of categories between Higgs bundles and generalized representations, if we allow $\mathbb{C}_{p}$ coefficients. Denote $\mathbb{C}_{p}=\widehat{\overline{\mathbb{Q}_{p}}}$. Let $\textbf{o}$ be the ring of integers in $\mathbb{C}_{p}$.

  In this article, we show that the $p$-adic Simpson correspondence for line bundles is rigid analytic under suitable conditions by viewing the moduli spaces of both sides as points of rigid analytic spaces. This question is the $p$-adic analogue of the fact that there is a complex analytic morphism (but not algebraic) from the moduli space for the vector bundles with integrable connection to the moduli space of representations of the fundamental group as algebraic varieties (See \cite{Simpson 1994} and \cite{Simpson}).
 
   Turning to details, the problem can be divided into two cases.
   \subsubsection{Vector bundle case}
    Let $X$ be a smooth proper curve over $\overline{\mathbb{Q}_{p}}$. In \cite{Deninger and Werner 2005a}, C. Deninger and A. Werner defined functorial isomorphisms of parallel transport along \'{e}tale paths for a class of vector bundles on $X_{\mathbb{C}_{p}}=X\times_{Spec \overline{\mathbb{Q}_{p}}} Spec \mathbb{C}_{p}$. The category of such vector bundles is denoted by $\mathcal{B}_{\mathbb{C}_{p}}^{s}$ and contains all vector bundles of degree 0 that have strongly semistable reduction. In particular, all vector bundles in $\mathcal{B}_{\mathbb{C}_{p}}^{s}$ give rise to continuous representations of the algebraic fundamental group on finite dimensional $\mathbb{C}_{p}$-vector spaces. The construction of C. Deninger and A. Werner is compatible with the construction of G. Faltings\cite{Faltings 2005} if the Higgs field $\theta$ is zero.
    
    As a special case of the line bundles, suppose that $X$ is a curve with good reduction over $\overline{\mathbb{Q}_{p}}$. Denote the Albanese variety of $X$ by $A$ and the genus of $X$ by $g$. Let $K$ be a finite extension of $\mathbb{Q}_{p}$ in $\overline{\mathbb{Q}_{p}}$ so that $A$ is defined over $K$, i.e, $A=A_{K}\otimes \overline{\mathbb{Q}_{p}}$ for any abelian variety $A_{K}$ over $K$. By \cite{Deninger and Werner 2005a} (line 30-32 of p.20) we have a continuous, $Gal(\overline{\mathbb{Q}_{p}}/K)$\footnote{In \cite{Deninger and Werner 2005a}, the notation $G_{K}$ on p.20 stands for $Gal(\overline{\mathbb{Q}_{p}}/K)$, this arises in line 11 of p.10}-equivariant homomorphism
    \[ \alpha: Pic^{0}_{X/\overline{\mathbb{Q}_{p}}}(\mathbb{C}_{p}) \longrightarrow Hom_{c}(\pi_{1}^{ab}(X), \mathbb{C}_{p}^{*}). \]
    
    Here $Hom_{c}(\pi_{1}^{ab}(X), \mathbb{C}_{p}^{*})$ is the topological group of continuous $\mathbb{C}_{p}^{*}$-valued characters of the algebraic fundamental group $\pi_{1}(X,x)$.
    
    For the rest of the article, suppose that $X/\overline{\mathbb{Q}_{p}}$ has good reduction. Our main result on line bundles is the following:
    
    \begin{theorem} (Theorem 3.2.6)
    Assume that $(Pic^{0}_{X/\overline{\mathbb{Q}_{p}}})^{an}$ is topologically $p$-torsion. Then we may enhance the set-theoretical map
    \[ \alpha: Pic^{0}_{X/\overline{\mathbb{Q}_{p}}}(\mathbb{C}_{p}) \longrightarrow Hom_{c}(\pi_{1}^{ab}(X), \mathbb{C}_{p}^{*}) \] to a rigid analytic morphism
    \[ \alpha^{an}: (Pic^{0}_{X/\overline{\mathbb{Q}_{p}}})^{an}  \longrightarrow
    (\mathbb{G}_{m}^{2g})^{an}. \]    
    \end{theorem}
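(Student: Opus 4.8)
The plan is to promote $\alpha$ to a morphism one coordinate at a time, using that it is a group homomorphism together with an explicit description of it via the $p$-adic logarithm and the Hodge--Tate comparison. Write $A=Pic^0_{X}$ (the Albanese variety of $X$); since $X$ has good reduction, $A$ is an abelian variety with good reduction and $\pi_1^{ab}(X)\cong\pi_1(A)=\varprojlim_n A[n]$ is topologically isomorphic to $\widehat{\mathbb Z}^{2g}$. The hypothesis guarantees that every point of $Pic^0_X(\mathbb C_p)$ is topologically $p$-torsion, whence by continuity of $\alpha$ each $\alpha(L)$ satisfies $\alpha(L)^{p^n}=\alpha(p^nL)\to 1$, i.e. takes values in $1+\mathfrak m_{\mathbb C_p}$, and a continuous homomorphism $\widehat{\mathbb Z}^{2g}\to 1+\mathfrak m_{\mathbb C_p}$ is automatically trivial on the prime-to-$p$ part and factors through the pro-$p$ completion $T_pA\cong\mathbb Z_p^{2g}$. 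Thus the relevant target is the open subgroup $\widehat{\mathbb G}_m\subset\mathbb G_m^{an}$ (the residue disc of $1$), raised to the power $2g$; fixing a $\mathbb Z_p$-basis $\gamma_1,\dots,\gamma_{2g}$ of $T_pA$ writes $\alpha=(\alpha_{\gamma_1},\dots,\alpha_{\gamma_{2g}})$ with each $\alpha_{\gamma_i}\colon Pic^0_X(\mathbb C_p)\to 1+\mathfrak m_{\mathbb C_p}$ a homomorphism, and it suffices to realize each $\alpha_{\gamma_i}$ by a rigid analytic morphism to $\mathbb G_m^{an}$.

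Next I would make $\alpha_\gamma$ explicit on the formal-group residue disc $D_0=\widehat{\mathfrak A}{}^{rig}$. Good reduction makes the formal groups of $A$ and of its dual defined over $\mathcal O_K$, so the $p$-adic logarithm $\log_A$ is a rigid analytic homomorphism on $D_0$ and on each of its translates, and the specialization map organizes the topologically $p$-torsion locus of $A^{an}$ as the disjoint union of the residue discs lying over the $p$-power torsion $\bar A(\overline{\mathbb F}_p)[p^\infty]$ of the special fibre. On $D_0$ I expect $\alpha_\gamma$ to be the composite of $\log_A$ with the $\mathbb C_p$-linear functional on $Lie(A)\otimes\mathbb C_p$ obtained by pairing --- via the canonical principal polarization of the Jacobian and the Weil pairing on Tate modules --- against the image of $\gamma$ under the Hodge--Tate projection $T_pA\hookrightarrow T_pA\otimes\mathbb C_p\twoheadrightarrow Lie(A)\otimes\mathbb C_p$, followed by the exponential of $\widehat{\mathbb G}_m$ (equivalently, by its binomial/$\mathbb Z_p$-power incarnation, which sidesteps convergence). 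This composite is manifestly a rigid analytic homomorphism $D_0\to\widehat{\mathbb G}_m$, and verifying that it reproduces the Deninger--Werner character on $\mathbb C_p$-points is exactly the $p$-adic comparison computation underlying \cite{Faltings 2005} and \cite{Deninger and Werner 2005a}, specialized to trivial Higgs field and rank one.

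Finally I would globalize and descend. Since $\alpha$ is a homomorphism, on the translate $t+D_0$, for $t$ running over representatives of $\bar A(\overline{\mathbb F}_p)[p^\infty]$, the map $\alpha_\gamma$ equals $y\mapsto\alpha_\gamma(t)\cdot\alpha_\gamma(y-t)$ with the second factor computed by the formula above; as the residue discs are precisely the connected components of the topologically $p$-torsion locus, these pieces glue to a morphism of rigid analytic groups over $\mathbb C_p$, which one checks to be $\Gal(\overline{\mathbb Q_p}/K)$-equivariant and hence, by descent along the finite extension $K$ over which $A$ and the relevant torsion are already defined, descends to the asserted $\alpha^{an}$. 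The step I expect to be the main obstacle is the second one: pinning down the logarithmic/Hodge--Tate formula and proving it computes the parallel-transport character of \cite{Deninger and Werner 2005a}; by comparison the reduction to coordinates and the gluing/descent are formal, the one point requiring care there being the systematic use of the topologically $p$-torsion hypothesis both to keep the target inside $\widehat{\mathbb G}_m^{2g}$ and to ensure that the residue-disc decomposition exhausts the source.
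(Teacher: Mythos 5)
Your strategy has the same skeleton as the paper's proof: reduce to a neighbourhood of the identity using the homomorphism property, identify $\alpha$ there with $\exp\circ \mathrm{Lie}(\alpha)\circ \log$ where $\mathrm{Lie}(\alpha)$ is the Hodge--Tate linear map, check that each factor is rigid analytic, and propagate by translation. The technical inputs differ, though. For the logarithm on the Picard side you use good reduction and the formal group, organising the topologically $p$-torsion locus into residue discs over the $p$-power torsion of the special fibre and gluing over those; the paper instead invokes Fargues' rigid analytic $p$-divisible groups, for which the topologically $p$-torsion hypothesis is precisely the condition making $(Pic^{0}_{X/\overline{\mathbb{Q}_{p}}})^{an}$ such a group with a globally defined rigid analytic $\log$, and then shrinks to Weierstrass domains on which $\log$ is an isomorphism of affinoids. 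For the identification of the candidate morphism with $\alpha$, you leave the Hodge--Tate/parallel-transport comparison as the acknowledged hard step; the paper sidesteps that computation by quoting Deninger--Werner's results that $\alpha$ is a $p$-adically analytic homomorphism of Lie groups compatible with $\log$ (Proposition 10 and Corollary 14 of their paper, giving the diagram (3.0.2)), and then upgrades the resulting agreement on $\mathbb{C}_{p}$-points to an equality of rigid morphisms via a maximum-principle/reducedness argument (Proposition 3.2.4 and Corollary 3.2.5). If you replace your ``main obstacle'' by that citation your argument closes, and it arguably gives a cleaner picture of where the hypothesis enters (it guarantees the residue-disc decomposition exhausts the source, which is exactly Fargues' condition); your coordinatewise decomposition via a $\mathbb{Z}_{p}$-basis of $T_{p}A$, the observation that the character lands in $1+\mathfrak{m}$ and kills the prime-to-$p$ part, and the final Galois descent are correct refinements that the paper leaves implicit.
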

    
    The main idea is to regard the morphism $\alpha^{an}$ as gluing by the rigid analytic morphism on the affinoid neighborhood from the viewpoint of the Lie algebra map. The motivation comes from the Taylor expansion and then consider the tangent space at the origin.
            
    \subsubsection{Higgs bundle case}
    For the Higgs field of rank one Higgs bundles, Faltings constructed the morphism
    \[ \Gamma(X, \Omega_{X}^{1}) \otimes \mathbb{C}_{p}(-1) \longrightarrow Hom(\pi_{1}^{ab}(X), \mathbb{C}_{p}^{*})\]
    
    The above map is the exponential of a $\mathbb{C}_{p}$-linear map into the Lie algebra of the group of representations. More precisely, the $\mathbb{C}_{p}$-linear map is 
  \[ \Gamma(X,\Omega_{X}^{1}) \otimes_{\overline{\mathbb{Q}_{p}}} \mathbb{C}_{p}(-1) \longrightarrow (\Gamma(X,\Omega_{X}^{1}) \otimes \mathbb{C}_{p}(-1)) \oplus (H^{1}(X,\mathcal{O}_{X}) \otimes \mathbb{C}_{p}) \] 
    Using the Hodge-Tate decomposition, we have the isomorphism
\[ H^{1}_{\acute{e}t}(X,\overline{\mathbb{Q}_{p}}) \otimes_{\overline{\mathbb{Q}_{p}}} \mathbb{C}_{p} \simeq  (\Gamma(X,\Omega_{X}^{1}) \otimes \mathbb{C}_{p}(-1)) \oplus (H^{1}(X,\mathcal{O}_{X}) \otimes \mathbb{C}_{p}).\]
Note that $Hom(\pi_{1}^{ab}(X),\mathbb{C}_{p}) \simeq H^{1}_{\acute{e}t}(X,\overline{\mathbb{Q}_{p}}) \otimes_{\overline{\mathbb{Q}_{p}}} \mathbb{C}_{p}$, then the exponential map from $\mathbb{C}_{p}$ to $\mathbb{C}_{p}^{*}$ induces the map
  \[ Hom(\pi_{1}^{ab}(X),\mathbb{C}_{p}) \longrightarrow Hom(\pi_{1}^{ab}(X),\mathbb{C}_{p}^{*}). \]
  Composing the maps together, we get the desired map.    
    
  A morphism of rigid analytic spaces is called \textit{locally rigid analytic} if it is rigid analytic on an open subset. In the next proposition, we show that the morphism from the moduli space of the Higgs bundles to the moduli space of representations is locally rigid analytic with small conditions.
    
    \begin{proposition} (Proposition 4.1.5)
    One can enhance the map
    \[ \Gamma(X, \Omega_{X}^{1}) \otimes \mathbb{C}_{p}(-1) \longrightarrow Hom(\pi_{1}^{ab}(X), \mathbb{C}_{p}^{*})\] to the morphism of rigid analytic spaces
    \[ \Gamma(X,\Omega_{X}^{1}) \otimes \overline{\mathbb{Q}_{p}}(-1) \otimes \mathbb{G}_{a} \longrightarrow (\mathbb{G}_{m}^{2g})^{an} \]
    corresponding to the small representations of $Hom(\pi_{1}^{ab}(X), \mathbb{C}_{p}^{*})$.    
    \end{proposition}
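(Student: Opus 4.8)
The plan is to write Faltings' map out in explicit coordinates, isolate the $p$-adic exponential as the only non-algebraic ingredient, and then observe that $\exp$ is rigid analytic precisely on the polydisc cut out by the smallness condition. Unlike the line-bundle case of Theorem 3.2.6, no gluing of affinoid charts is needed: the map is globally the exponential of a $\mathbb{C}_p$-linear map, so the sole constraint is the radius of convergence of $\exp$. Concretely, I would fix a $\overline{\mathbb{Q}_p}$-basis $\omega_1,\dots,\omega_g$ of $\Gamma(X,\Omega_X^1)$ and a $\mathbb{Z}$-basis $\gamma_1,\dots,\gamma_{2g}$ of $H_1(X,\mathbb{Z})=\pi_1^{ab}(X)$. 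As recalled before the proposition, the map $\Gamma(X,\Omega_X^1)\otimes\mathbb{C}_p(-1)\to\mathrm{Hom}(\pi_1^{ab}(X),\mathbb{C}_p^*)$ is the composite of the Hodge--Tate inclusion
\[ \Gamma(X,\Omega_X^1)\otimes\mathbb{C}_p(-1)\ \hookrightarrow\ H^1_{\acute{e}t}(X,\overline{\mathbb{Q}_p})\otimes_{\overline{\mathbb{Q}_p}}\mathbb{C}_p\ \simeq\ \mathrm{Hom}(\pi_1^{ab}(X),\mathbb{C}_p) \]
with the coefficientwise exponential. Letting $p_{ij}\in\mathbb{C}_p$ denote the value at $\gamma_j$ of the functional attached to $\omega_i$ by this inclusion, and using the linear coordinates $(a_1,\dots,a_g)$ on the source $\Gamma(X,\Omega_X^1)\otimes\overline{\mathbb{Q}_p}(-1)\otimes\mathbb{G}_a$ (whose analytification over $\mathbb{C}_p$ is $(\mathbb{A}^g)^{an}$) and the multiplicative coordinates $t_1,\dots,t_{2g}$ on $\mathbb{G}_m^{2g}$, the map becomes
\[ (a_1,\dots,a_g)\ \longmapsto\ \bigl(\exp(L_1(a)),\dots,\exp(L_{2g}(a))\bigr),\qquad L_j(a):={\textstyle\sum_{i=1}^{g}}a_i\,p_{ij}. \]
Since the Hodge--Tate splitting is only $\mathbb{C}_p$-linear, the $p_{ij}$ lie outside $\overline{\mathbb{Q}_p}$ in general, which is the source of non-algebraicity --- the $p$-adic counterpart of Simpson's analytic-but-not-algebraic map.

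Next I would invoke the standard facts on the $p$-adic exponential: $\exp(T)=\sum_{n\ge0}T^n/n!$ has radius of convergence $p^{-1/(p-1)}$ and is rigid analytic, with values in $1+\mathfrak m_{\mathbb{C}_p}\subset\mathbb{C}_p^{*}$, on the open disc $D=\{\,|T|<p^{-1/(p-1)}\,\}$, with analytic inverse $\log$. Each $L_j$ is a $\mathbb{C}_p$-linear, hence rigid analytic, function on $(\mathbb{A}^g)^{an}$, so
\[ U\ :=\ \bigl\{\,a\in(\mathbb{A}^g)^{an}\ :\ |L_j(a)|<p^{-1/(p-1)}\text{ for all }1\le j\le 2g\,\bigr\} \]
is an admissible open subset, and on $U$ the assignment $a\mapsto(\exp L_1(a),\dots,\exp L_{2g}(a))$ is a morphism of rigid analytic spaces $U\to(\mathbb{G}_m^{2g})^{an}$. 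On $\mathbb{C}_p$-points it reproduces Faltings' set-theoretic map by construction, so the latter is now enhanced to a morphism that is rigid analytic on the open subset $U$, i.e. locally rigid analytic in the sense of the definition preceding the proposition.

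It then remains to identify $U$ with the small locus. The smallness condition on a rank-one Higgs field $\theta=\sum_i a_i\omega_i$ --- a bound $\|\theta\|<c$ with $c$ Faltings' constant for rank-one data --- should translate through the period matrix $(p_{ij})$ into the inequalities defining $U$ (possibly after shrinking $U$ to a smaller admissible polydisc, if Faltings' constant is more restrictive), so that $\exp(U(\mathbb{C}_p))$ is exactly the set of small characters, i.e. the small representations in $\mathrm{Hom}(\pi_1^{ab}(X),\mathbb{C}_p^*)$. I expect this comparison to be the main obstacle: one must pin down Faltings' normalization of ``small'' for rank-one Higgs bundles and verify that, under the explicit Hodge--Tate period pairing, it matches --- up to shrinking --- the convergence domain of $\exp$. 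Everything else is a formal consequence of the rigid analyticity of $\exp$ and of linear maps.
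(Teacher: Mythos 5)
Your proposal is correct and follows essentially the same route as the paper: both isolate the $p$-adic exponential as the only non-algebraic ingredient, observe that the Hodge--Tate map is $\mathbb{C}_p$-linear and hence rigid analytic, and use the smallness condition to land inside the disc $|T|<p^{-1/(p-1)}$ where $\exp$ is a convergent power series and $Exp=\exp$. The comparison you flag as the main obstacle --- matching Faltings' smallness normalization with the convergence radius through the period pairing --- is exactly the step the paper dispatches by asserting $p^{\alpha}\mid\beta(\theta)$ for the linear map $\beta$, so your more explicit coordinate treatment only makes visible a point the paper leaves implicit.
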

    
    Finally, there is a bijection between Higgs bundles of degree 0 and representations in rank one case by \cite{Faltings 2005}. The main result of this article can be formulated as follows.
    
    \begin{theorem} (Theorem 4.2.1)
    Under some mild conditions in Theorem 3.2.6 and Proposition 4.1.5 and the assumption that \[f: (Pic^{0}_{X/\overline{\mathbb{Q}_{p}}})^{an} \times (\Gamma(X,\Omega_{X}^{1})\otimes \overline{\mathbb{Q}_{p}}(-1) \otimes \mathbb{G}_{a}) \longrightarrow (\mathbb{G}_{m}^{2g})^{an}\] is an affinoid morphism,  we deduce that the $p$-adic Simpson correspondence for line bundles
    \[ Hom(\pi_{1}^{ab}(X), \mathbb{C}_{p}^{*})_{small} \longrightarrow Pic^{0}_{X/\overline{\mathbb{Q}_{p}}}(\mathbb{C}_{p}) \times (\Gamma(X,\Omega_{X}^{1}) \otimes \mathbb{C}_{p}(-1))_{small} \] can be enhanced to the rigid analytic morphism defined on the open rigid analytic subspace $U \subset (\mathbb{G}_{m}^{2g})^{an}$  
    \[ (\mathbb{G}_{m}^{2g})^{an} \longrightarrow (Pic^{0}_{X/\overline{\mathbb{Q}_{p}}})^{an} \times (\Gamma(X, \Omega_{X}^{1}) \otimes \overline{\mathbb{Q}_{p}}(-1) \otimes \mathbb{G}_{a}). \]
    \end{theorem}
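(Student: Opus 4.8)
The plan is to assemble the two forward maps of Theorem 3.2.6 and Proposition 4.1.5 into the single morphism $f$ appearing in the statement, to check that $f$ is \'etale along the small locus via a tangent-space computation recovering the Hodge--Tate decomposition, and then to use the affinoid hypothesis to upgrade ``\'etale and injective on $\mathbb{C}_p$-points'' into ``open immersion'', whose inverse is the desired rigid analytic morphism. Concretely, $f$ is the product $\alpha^{an}\cdot(\text{Higgs map of Proposition 4.1.5})$ formed using the group law of $(\mathbb{G}_m^{2g})^{an}$, hence a morphism of smooth rigid analytic spaces of dimension $2g$ (note $\dim \Gamma(X,\Omega_X^1) = g = \dim Pic^{0}_{X/\overline{\mathbb{Q}_p}}$)
\[ (Pic^{0}_{X/\overline{\mathbb{Q}_p}})^{an} \times \bigl(\Gamma(X,\Omega_X^1) \otimes \overline{\mathbb{Q}_p}(-1) \otimes \mathbb{G}_a\bigr) \longrightarrow (\mathbb{G}_m^{2g})^{an}. \]
By Faltings' rank-one $p$-adic Simpson correspondence, on $\mathbb{C}_p$-points $f$ is a bijection onto $Hom(\pi_1^{ab}(X), \mathbb{C}_p^*)_{small}$ whose inverse is exactly the set-theoretic correspondence in the statement, so it remains to promote that inverse to a morphism of rigid spaces.

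For \'etaleness I would compute $df$ at the origin $(\mathcal{O}_X, 0)$. The tangent space of $(Pic^{0}_{X/\overline{\mathbb{Q}_p}})^{an}$ there is $H^1(X,\mathcal{O}_X) \otimes \mathbb{C}_p$, that of the $\mathbb{G}_a$-factor is $\Gamma(X,\Omega_X^1) \otimes \mathbb{C}_p(-1)$, and that of $(\mathbb{G}_m^{2g})^{an}$ at $1$ is $Hom(\pi_1^{ab}(X), \mathbb{C}_p) \simeq H^1_{\acute{e}t}(X,\overline{\mathbb{Q}_p}) \otimes_{\overline{\mathbb{Q}_p}} \mathbb{C}_p$. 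Unwinding the construction of $\alpha^{an}$ through its Lie algebra map (as in the proof of Theorem 3.2.6) and of the Higgs map through the $\mathbb{C}_p$-linear map into the Lie algebra recalled in Section 4, one checks that $df$ at the origin is precisely the Hodge--Tate decomposition isomorphism $H^1_{\acute{e}t}(X,\overline{\mathbb{Q}_p}) \otimes_{\overline{\mathbb{Q}_p}} \mathbb{C}_p \simeq (\Gamma(X,\Omega_X^1) \otimes \mathbb{C}_p(-1)) \oplus (H^1(X,\mathcal{O}_X) \otimes \mathbb{C}_p)$. Hence $df$ is invertible at the origin, so by the rigid analytic inverse function theorem $f$ is \'etale on an affinoid neighbourhood of the origin; repeating the computation at the remaining points of the small locus, using the group structures and the translation-compatibility built into the constructions, gives an admissible open $V$ containing the whole small locus on which $f$ is \'etale.

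Then comes the globalization, where the hypothesis that $f$ be an affinoid morphism is used. On $V$ the map $f$ is affinoid, \'etale, and injective on $\mathbb{C}_p$-points; since \'etale morphisms of rigid spaces are open, $f|_V$ is a local isomorphism, and being injective it identifies $V$ with an admissible open $U := f(V) \subset (\mathbb{G}_m^{2g})^{an}$, which on $\mathbb{C}_p$-points contains $Hom(\pi_1^{ab}(X), \mathbb{C}_p^*)_{small}$. The inverse
\[ f|_V^{-1}\colon U \longrightarrow (Pic^{0}_{X/\overline{\mathbb{Q}_p}})^{an} \times \bigl(\Gamma(X,\Omega_X^1) \otimes \overline{\mathbb{Q}_p}(-1) \otimes \mathbb{G}_a\bigr) \]
is a morphism of rigid analytic spaces which on $\mathbb{C}_p$-points agrees, by construction, with the $p$-adic Simpson correspondence $Hom(\pi_1^{ab}(X), \mathbb{C}_p^*)_{small} \to Pic^{0}_{X/\overline{\mathbb{Q}_p}}(\mathbb{C}_p) \times (\Gamma(X,\Omega_X^1) \otimes \mathbb{C}_p(-1))_{small}$, which is the assertion.

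The step I expect to be the main obstacle is controlling the \'etale locus away from the origin: the tangent computation is transparent at the origin, but to obtain a genuine admissible open $V$ one must track how the Hodge--Tate comparison varies over the small locus and verify that the smallness conditions of Theorem 3.2.6 and Proposition 4.1.5 cut out exactly the region on which the relevant exponential and logarithm series converge and $df$ stays invertible; the affinoid-morphism hypothesis is precisely what lets one bypass a delicate direct analysis and conclude that $U$ is admissible open. A secondary subtlety is reconciling the two a priori distinct targets $(\mathbb{G}_m^{2g})^{an}$ coming from $\alpha^{an}$ and from the Higgs map, so that their product lives in a single $(\mathbb{G}_m^{2g})^{an}$ compatibly with the group structure underlying Faltings' bijection.
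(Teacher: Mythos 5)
Your argument reaches the correct conclusion but by a genuinely different route from the paper's. The paper's proof of Theorem 4.2.1 is purely formal: it takes as given that $f$ is rigid analytic and bijective on points (via Faltings' rank-one correspondence), uses the affinoid-morphism hypothesis to reduce to a single morphism of affinoid algebras $\varphi\colon A\to B$, and then simply inverts $\varphi$ to produce $f^{-1}$; no tangent-space computation, no \'etaleness, and no inverse function theorem appear anywhere. You instead establish that $f$ is \'etale by identifying $df$ at the origin with the Hodge--Tate decomposition (strictly speaking, with a unipotent modification of it, since the second component of Faltings' linear map $\beta$ contributes an off-diagonal block depending on the lift to $A_{2}(V)$ --- but the block-triangular shape still gives invertibility), propagate this over the small locus by translation using the group structures, and conclude via ``\'etale, injective on points, and quasi-compact implies open immersion.'' What your approach buys is a genuine justification for the key invertibility: bijectivity of $f$ on $\mathbb{C}_{p}$-points alone does not make $\varphi$ an isomorphism of affinoid algebras (reducedness and some form of unramifiedness are needed), and the paper's final assertion that $\varphi^{-1}\colon B\to A$ exists is precisely the step your inverse-function-theorem argument substantiates. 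What the paper's route buys is brevity. The loose ends you flag yourself --- admissibility of $U=f(V)$, injectivity of $f$ on all of $V$ rather than only on the small locus, and matching your $U$ with the tube of $1$ constructed in Section 4.1.2 --- are real, but they are of the same order as, or smaller than, what the paper's own proof leaves implicit.
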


\begin{remark}
There is another proof of Theorem 1.0.3 in section 5 of \cite{Heuer}. To be more precise, for a smooth rigid analytic space $X$ over a perfectoid extension $K$ of $\mathbb{Q}_{p}$, they use the diamantine universal cover $\tilde{X} \longrightarrow X$ to construct the $p$-adic Simpson correspondence of rank one.
\end{remark}
    
\subsection*{Acknowledgements}
The work is part of my PhD thesis. I would like to thank my advisor, Mao Sheng, for his invaluable guidance. I am also grateful to the anonymous referee for their helpful comments on the first version of this article. The author was supported by NSFC under Grant Nos. 11721101.
    
\section{Preliminaries}
 In this section, we present some basic knowledge in rigid geometry. We shall use these results in the latter sections to prove our main theorems.
 \subsection{Maximum principle for affinoid $K$-algebra}
 Let $K$ be a field with a complete nonarchimedean absolute value that is nontrivial. For integers $n \geq 1$, let 
 \[ \mathbb{B}^{n}(\overline{K})=\{(x_{1},\dots,x_{n}) \in \overline{K}^{n}: |x_{i}| \leq 1\} \]
 be the unit ball in $\overline{K}^{n}$.
 
 \begin{definition}
 The $K$-algebra $T_{n}=K\langle\xi_{1},\dots,\xi_{n}\rangle$ of all formal power series
 \[ \sum \limits_{\nu \in \mathbb{N}^{n}} c_{\nu} \xi^{\nu} \in K[[\xi_{1},\dots,\xi_{n}]], \quad c_{\nu} \in K, \quad \lim \limits_{|\nu| \rightarrow \infty} |c_{\nu}|=0. \]
 i.e, converging on $\mathbb{B}^{n}(\overline{K})$, is called the \textit{Tate algebra} of restricted convergent power series.
 \end{definition}
 
 We define the \textit{Gauss norm} on $T_{n}$ by setting 
 \[ |f|=max|c_{\nu}|\quad for \quad f=\sum \limits_{\nu} c_{\nu} \xi^{\nu}. \]
 \begin{definition}
 A $K$-algebra $A$ is called an \textit{affinoid $K$-algebra} if there is an epimorphism of $K$-algebra $\alpha: T_{n} \longrightarrow A$ for some $n\in \mathbb{N}$.
 \end{definition}
 
 For elements $f\in A$, set
 \[ |f|_{sup}=\sup \limits_{x \in Max \,A} |f(x)| \]
 where $Max \,A$ is the spectrum of maximal ideals in $A$ and for any $x \in Max \,A$, write $f(x)$ for the residue class of $f$ in $A/x$.
 
 \begin{remark}
 In general case, $|\,\,|_{sup}$ is a $K$-algebra seminorm. Moreover, $|f|_{sup}=0$ is equivalent to that $f$ is nilpotent.
 \end{remark}

 Now we give an nonarchimedean analogue of the maximum principle in complex analysis.
 
 \begin{theorem}(Theorem 3.1.15 \cite{Bosch})
 For any affinoid $K$-algebra $A$ and for any $f \in A$, there exists a point $x \in Max \, A$ such that $|f(x)|=|f|_{sup}$. 
 \end{theorem}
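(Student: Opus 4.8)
The plan is to reduce the general affinoid case to that of a Tate algebra by Noether normalization, and then to settle the Tate-algebra case directly by reducing modulo the maximal ideal of the valuation ring of $K$ (after a harmless finite extension). Two preliminary reductions come first. Since $|f(x)|$ depends only on the image of $f$ in $A/x$ and the nilradical of $A$ lies in every maximal ideal, replacing $A$ by $A_{\mathrm{red}}$ alters neither $\mathrm{Max}\,A$ nor the numbers $|f(x)|$ nor $|f|_{\sup}$; so I may assume $A$ reduced, in which case $|\cdot|_{\sup}$ is a norm by the remark preceding the statement. By Noether normalization for affinoid algebras there is a finite monomorphism $T_{d}\hookrightarrow A$ with $d=\dim A$, and the induced map $\pi\colon \mathrm{Max}\,A\to\mathrm{Max}\,T_{d}$ is surjective with finite fibres. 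Fix $f\in A$; since $A$ is module-finite over $T_{d}$ the element $f$ is integral over $T_{d}$, and since $T_{d}$ is integrally closed its minimal polynomial over $\mathrm{Frac}(T_{d})$ has coefficients in $T_{d}$: write it as $P(X)=X^{s}+c_{s-1}X^{s-1}+\cdots+c_{0}\in T_{d}[X]$, so that $T_{d}[f]\cong T_{d}[X]/(P)$ is $T_{d}$-free of rank $s$.

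The heart of the argument is the identity
\[ |f|_{\sup}=\max_{0\le i\le s-1}|c_{i}|_{\sup}^{\,1/(s-i)}, \]
together with the assertion that this supremum is attained — granting for a moment the theorem for the Tate algebra $T_{d}$ itself. Indeed, for $y\in\mathrm{Max}\,T_{d}$ the residue field $\kappa(y)=T_{d}/y$ is finite over $K$ and carries a unique absolute value; the specialization $P_{y}(X)=X^{s}+c_{s-1}(y)X^{s-1}+\cdots+c_{0}(y)$ splits over $\overline{\kappa(y)}$, the fibre over $y$ of $\mathrm{Max}(T_{d}[f])\to\mathrm{Max}\,T_{d}$ consists of the roots of $P_{y}$ up to conjugacy, and $\mathrm{Max}\,A\to\mathrm{Max}(T_{d}[f])$ is surjective; hence $\{\,|f(x)|:x\in\pi^{-1}(y)\,\}$ equals the set of absolute values of roots of $P_{y}$, whose maximum is $\max_{i}|c_{i}(y)|^{1/(s-i)}$ by the Newton-polygon bound for roots of a polynomial over a non-archimedean field. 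Taking the supremum over $y$ and commuting it with the monotone operations yields $|f|_{\sup}=\max_{i}|c_{i}|_{\sup}^{1/(s-i)}$; and if $j$ attains the outer maximum and $y_{0}\in\mathrm{Max}\,T_{d}$ satisfies $|c_{j}(y_{0})|=|c_{j}|_{\sup}$ (such $y_{0}$ exists by the Tate-algebra case applied to $c_{j}$), then any $x\in\pi^{-1}(y_{0})$ with $|f(x)|$ maximal obeys $|f(x)|\ge|c_{j}(y_{0})|^{1/(s-j)}=|f|_{\sup}$, while the reverse inequality is automatic, so the supremum is attained at $x$.

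It remains to prove the theorem for $A=T_{d}$, and here I would argue directly, without induction. After scaling by an element of $K^{\times}$ I may assume the Gauss norm $|f|=1$. Only finitely many coefficients $c_{\nu}$ of $f$ have $|c_{\nu}|=1$, and at least one does, so the reduction $\tilde f=\sum_{|c_{\nu}|=1}\tilde c_{\nu}\,\xi^{\nu}$ is a nonzero polynomial over the residue field $k$ of $K$, of some finite total degree. Possibly after a finite unramified extension $K'/K$ whose residue field is large enough, there is $\bar a\in\mathbb{B}^{d}$ reducing into the residue field of $K'$ with $\tilde f(\bar a)\ne 0$; lifting $\bar a$ to $a\in\mathcal{O}_{K'}^{\,d}$ gives $f(a)\notin\mathfrak{m}_{K'}$, so the corresponding point $x\in\mathrm{Max}\,T_{d}$ satisfies $|f(x)|=1=|f|$. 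As $|f(x)|\le|f|_{\sup}\le|f|$ always, all three coincide, which proves the claim for $T_{d}$ (in fact $|f|_{\sup}=|f|$ there).

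The main obstacle is the key identity, and inside it the statement that the fibres of the finite surjection $\mathrm{Max}\,A\to\mathrm{Max}\,T_{d}$ realize precisely the roots of the specialized minimal polynomial $P_{y}$: this rests on the affinoid Nullstellensatz, on $T_{d}$ being integrally closed so that $T_{d}[f]=T_{d}[X]/(P)$, and on lying-over for finite affinoid morphisms — in the end on the structure theory of affinoid algebras built from Weierstrass division. Once Noether normalization and the Nullstellensatz are available, the remaining ingredients (the Newton-polygon estimate and the reduction-mod-$\mathfrak{m}$ argument for the Tate algebra) are routine.
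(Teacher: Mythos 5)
The paper offers no proof of this statement --- it is quoted verbatim from Bosch (Theorem 3.1.15) --- and what you have written is essentially the standard proof from that source (and BGR 6.2.1): Noether normalization, the identity $|f|_{\sup}=\max_i|c_i|_{\sup}^{1/(s-i)}$ for the coefficients of an integral equation, the Newton-polygon computation of the largest root of the specialized polynomial, and reduction modulo the maximal ideal of the valuation ring for the Tate algebra itself. All of those steps are sound. The one genuine gap is at the start of your second reduction: after replacing $A$ by $A_{\mathrm{red}}$ you pass directly to ``the minimal polynomial of $f$ over $\mathrm{Frac}(T_d)$'' and assert that $T_d[f]\cong T_d[X]/(P)$ is $T_d$-free of rank $s$. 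This can fail when $A$ is reduced but not a domain, because $A$ may have $T_d$-torsion and then does not embed into any $\mathrm{Frac}(T_d)$-algebra: for instance with $A=T_1\times K$ and $T_1\to A$, $t\mapsto(t,t(0))$, which is finite and injective, the kernel of $T_1[X]\to A$, $X\mapsto(0,1)$, is $(X(X-1),\,\xi X)$, not generated by a monic polynomial, and the description of the fibres of $\mathrm{Max}\,A\to\mathrm{Max}\,T_1$ as the roots of a single $P_y$ breaks down. The standard repair is one further reduction before Noether normalization: every maximal ideal of $A$ contains a minimal prime, so $|f|_{\sup,A}=\max_{\mathfrak{p}}|f|_{\sup,A/\mathfrak{p}}$ over the finitely many minimal primes, and both the value and its attainment may be checked on each $A/\mathfrak{p}$; hence one may assume $A$ is an integral domain, in which case $A$ is torsion-free over $T_d$, the minimal polynomial exists with coefficients in $T_d$ by normality of $T_d$, and the rest of your argument (lying-over, the Newton-polygon bound, and the residue-field argument for $T_d$) goes through verbatim.
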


 \subsection{Canonical topology on affinoid $K$-space}
 Let $A$ be an affinoid $K$-algebra, the elements of $A$ can be viewed as functions on $Max \, A$. More precisely, let us define $f(x)$ for $f \in A$ and $x \in Max \, A$ as the residue class of $f$ in $A/x$. Write $Sp\,A$ for the set $Max \, A$ together with its $K$-algebra of functions $A$ and call it the \textit{affinoid $K$-space} associated to $A$.  
 
 For an affinoid $K$-space $X=Sp \, A$, set
 \[ X(f,\varepsilon)= \{ x \in X: |f(x)| \leq \varepsilon \} \]
 for $f \in A$ and $\varepsilon \in \mathbb{R}_{>0}$.
 
 \begin{definition}
 For any affinoid $K$-space $X=Sp \, A$, the topology generated by all sets of type $X(f;\varepsilon)$ with $f \in A$ and $\varepsilon \in \mathbb{R}_{>0}$ is called the canonical topology of $X$. Define $X(f):=X(f;1)$.
 \end{definition}

 Roughly speaking, an \textit{affinoid subdomain} is an open subset with respect to canonical topology which has the structure of affinoid $K$-space (See Section 3.3 of \cite{Bosch}). A subset in $X$ of type 
 \[ X(f_{1},\dots,f_{r})=\{ x \in X: |f_{i}(x)| \leq 1 \} \]
 for functions $f_{1},\dots,f_{r} \in A$ is called \textit{Weierstrass domain} in $X$.
 
 Now we provide a basis for the canonical topology of affinoid $K$-space.
 
 \begin{proposition}(Proposition 3.3.5 \cite{Bosch})
 Let $X=Sp \, A$ be an affinoid $K$-space and $x \in X$ correspond to the maximal ideal $m_{x} \subset A$. Then the sets $X(f_{1},\dots,f_{r}):=X(f_{1}) \cap \dots \cap X(f_{r})$ for $f_{1},\dots,f_{r} \in m_{x}$ and variable $r$ form a basis of neighborhoods of $x$.
 \end{proposition}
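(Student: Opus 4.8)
The plan is to verify the two defining properties of a neighborhood basis in turn. That each $X(f_{1},\dots,f_{r})$ with all $f_{i}\in m_{x}$ is an open neighborhood of $x$ is immediate: since $f_{i}(x)=0$ we have $|f_{i}(x)|=0\le1$, so $x\in X(f_{i})=X(f_{i};1)$ for every $i$, and each $X(f_{i};1)$ is one of the sets generating the canonical topology, hence open; therefore the finite intersection $X(f_{1})\cap\dots\cap X(f_{r})$ is an open set containing $x$. The substance of the statement is the converse. Any neighborhood of $x$ contains a finite intersection $\bigcap_{j=1}^{m}X(g_{j};\varepsilon_{j})$ of generators which still contains $x$, so it suffices to prove the following: for every generator $X(g;\varepsilon)$ with $g\in A$, $\varepsilon>0$ and $|g(x)|\le\varepsilon$, there is a single $f\in m_{x}$ with $x\in X(f)\subseteq X(g;\varepsilon)$. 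Granting this, feeding in such $f_{1},\dots,f_{m}$ for the factors $X(g_{1};\varepsilon_{1}),\dots,X(g_{m};\varepsilon_{m})$ yields $X(f_{1},\dots,f_{m})\subseteq\bigcap_{j}X(g_{j};\varepsilon_{j})$, a set of the required form sitting inside the given neighborhood.

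So fix $g$ and $\varepsilon$ as above. The key device is to replace $g$ --- whose zero locus bears no relation to $x$ --- by a suitable polynomial expression in $g$ lying in $m_{x}$, without enlarging the sublevel set. First I would invoke the Nullstellensatz for affinoid algebras: $K':=A/m_{x}$ is a finite extension of $K$, so the absolute value of $K$ extends uniquely to $K'$, and hence the element $b:=g(x)\in K'$, which has $|b|\le\varepsilon$, has all of its $K$-conjugates of absolute value $\le\varepsilon$ as well. Consequently the minimal polynomial $P(T)=T^{d}+c_{d-1}T^{d-1}+\dots+c_{0}\in K[T]$ of $b$ satisfies $|c_{i}|\le\varepsilon^{\,d-i}$, by the elementary symmetric function expressions for its coefficients. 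Now $P(g)\in A$ vanishes at $x$, hence lies in $m_{x}$, and a short ultrametric estimate gives $X\!\left(P(g);\varepsilon^{d}\right)\subseteq X(g;\varepsilon)$: if $|g(y)|>\varepsilon$ then for each $i<d$ one has $|c_{i}g(y)^{i}|\le\varepsilon^{\,d-i}|g(y)|^{i}<|g(y)|^{d}$, so the leading term strictly dominates and $|P(g)(y)|=|g(y)|^{d}>\varepsilon^{d}$. Finally, using that the absolute value on $K$ is nontrivial I would pick $c\in K^{*}$ with $|c|\ge\varepsilon^{-d}$; then $f:=c\,P(g)$ still belongs to the ideal $m_{x}$ and vanishes at $x$, while $X(f)=\{\,y:|P(g)(y)|\le|c|^{-1}\,\}\subseteq X\!\left(P(g);\varepsilon^{d}\right)\subseteq X(g;\varepsilon)$. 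Thus $x\in X(f)\subseteq X(g;\varepsilon)$, as wanted.

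Combining the two halves proves the proposition, with $r$ at most the number of generators one carves out of a given neighborhood. Besides the nonarchimedean triangle inequality, the only inputs are the finiteness of $A/m_{x}$ over $K$ and the nontriviality of the valuation, so I do not anticipate a serious obstacle; the step I would take most care to spell out is the minimal-polynomial passage in the second paragraph, since that is exactly what lets one trade a generator $X(g;\varepsilon)$ --- whose defining function need not lie in $m_{x}$ --- for an honest element of $m_{x}$. If one prefers to avoid the affinoid Nullstellensatz in this generality, an alternative is to fix a surjection $T_{n}\twoheadrightarrow A$, establish the statement for the Tate algebra $T_{n}$ first (where the finiteness of the residue fields over $K$ is classical) by the identical computation, and then transport it along the closed immersion $Sp\,A\hookrightarrow\mathbb{B}^{n}$; the direct argument above, however, seems cleaner.
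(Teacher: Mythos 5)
Your argument is correct: both halves are sound, and the key step --- replacing a subbasic set $X(g;\varepsilon)$ by $X(f)$ with $f=c\,P(g)\in m_{x}$, where $P$ is the minimal polynomial of $g(x)$ over $K$ and the coefficient bounds $|c_{i}|\le\varepsilon^{d-i}$ plus the ultrametric inequality give $|P(g)(y)|=|g(y)|^{d}$ whenever $|g(y)|>\varepsilon$ --- is exactly the standard device. The paper itself gives no proof (it quotes the statement from Bosch), and your argument is essentially the one in that reference, so there is nothing to flag.
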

 
 Finally, the next proposition shows that morphism between affinoid $K$-spaces is continuous with respect to the canonical topology.
 
 \begin{proposition}(Proposition 3.3.6 \cite{Bosch})
 Let $\varphi^{*}:A \longrightarrow B$ be a morphism of affinoid $K$-algebras, and let $\varphi:Sp\, B \longrightarrow Sp \, A$ be the associated morphism of affinoid $K$-spaces. Then for $f_{1},\dots,f_{r} \in A$, we have
 \[ \varphi^{-1}((Sp\,A)(f_{1},\dots,f_{r}))=(Sp\,B)(\varphi^{*}(f_{1}),\dots,\varphi^{*}(f_{r})).\]
 In particular, $\varphi$ is continuous with respect to the canonical topology.
 \end{proposition}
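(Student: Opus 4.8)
The plan is to reduce both assertions to a single pointwise identity: for every point $x \in Sp\,B$ and every $f \in A$,
\[ |f(\varphi(x))| = |\varphi^{*}(f)(x)|. \]
Granting this, the proposition follows by unwinding definitions. A point $x$ lies in $\varphi^{-1}((Sp\,A)(f_{1},\dots,f_{r}))$ if and only if $\varphi(x) \in (Sp\,A)(f_{1},\dots,f_{r})$, that is, if and only if $|f_{i}(\varphi(x))| \leq 1$ for all $i$; by the pointwise identity this is equivalent to $|\varphi^{*}(f_{i})(x)| \leq 1$ for all $i$, i.e. to $x \in (Sp\,B)(\varphi^{*}(f_{1}),\dots,\varphi^{*}(f_{r}))$, which is the claimed equality of Weierstrass-type subsets (an exact equality, not merely an inclusion, precisely because the pointwise comparison is an equality of absolute values). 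Running the same computation with the bound $1$ replaced by an arbitrary $\varepsilon \in \mathbb{R}_{>0}$ gives $\varphi^{-1}((Sp\,A)(f;\varepsilon)) = (Sp\,B)(\varphi^{*}(f);\varepsilon)$, so the $\varphi$-preimage of each generating set of the canonical topology of $Sp\,A$ is a generating set of the canonical topology of $Sp\,B$; hence $\varphi$ is continuous.

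It remains to prove the pointwise identity, and this is where the structure theory of affinoid algebras enters. First I would verify that $\varphi$ is well defined on maximal spectra: with $\mathfrak{p} := (\varphi^{*})^{-1}(m_{x})$, the homomorphism $\varphi^{*}$ induces an injection $A/\mathfrak{p} \hookrightarrow B/m_{x}$, and since $B/m_{x}$ is a finite field extension of $K$ (a basic property of affinoid algebras, ultimately a consequence of Noether normalization for Tate algebras), the domain $A/\mathfrak{p}$ is finite-dimensional over $K$, hence a field; thus $\mathfrak{p} \in Max\,A$ and $\varphi(x)$ is defined. By definition $f(\varphi(x))$ is the image of $f$ in $A/\mathfrak{p}$ and $\varphi^{*}(f)(x)$ is the image of $\varphi^{*}(f)$ in $B/m_{x}$, and under the inclusion $A/\mathfrak{p} \hookrightarrow B/m_{x}$ the former is sent to the latter. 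Since $K$ is complete with respect to its nonarchimedean absolute value, that value extends uniquely to the finite extension $B/m_{x}$, and its restriction to the intermediate field $A/\mathfrak{p}$ is then the unique extension to $A/\mathfrak{p}$ --- precisely the one used to define $|f(\varphi(x))|$. Hence both sides of the identity are one and the same absolute value applied to one and the same element, so the identity holds, and assembling the pieces as above completes the proof.

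I expect the only genuine obstacle to be the two ingredients invoked above: that the residue field of an affinoid $K$-algebra at a maximal ideal is a finite extension of $K$, and that a complete nonarchimedean absolute value extends uniquely to any finite extension. Both are standard, and I would cite them (e.g. from \cite{Bosch}) rather than reprove them; everything else is a formal manipulation of the definitions of the canonical topology and of the evaluation maps $f \mapsto f(x)$.
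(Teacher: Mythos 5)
Your proof is correct: the reduction to the pointwise identity $|f(\varphi(x))|=|\varphi^{*}(f)(x)|$, established via the finiteness of residue fields of affinoid algebras and the uniqueness of the extension of a complete nonarchimedean absolute value, is exactly the standard argument (the paper itself gives no proof here, quoting Proposition 3.3/6 of \cite{Bosch} verbatim, and Bosch's proof proceeds the same way). Nothing is missing.
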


 \subsection{GAGA functor on rigid analytic space}
Let $X$ be an affinoid $K$-space. For any affinoid subdomain $U \subset X$ we denote the affinoid $K$-algebra corresponding to $U$ by $\mathcal{O}_{X}(U)$. $\mathcal{O}_{X}$ is a presheaf of affinoid $K$-algebras on the category of affinoid subdomains of $X$. Moreover, $\mathcal{O}_{X}$ is a sheaf by Tate's acyclicity Theorem as follows:

\begin{theorem}(Theorem 4.3.10 \cite{Bosch})
Let $X$ be an affinoid $K$-space and $\mathcal{U}$ is a finite covering of $X$ by affinoid subdomains. Then $\mathcal{U}$ is acyclic with respect to the presheaf $\mathcal{O}_{X}$ of affinoid functions on $X$.
\end{theorem}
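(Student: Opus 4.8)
The plan is to prove Tate's acyclicity theorem by a d\'evissage that reduces an arbitrary finite covering of an affinoid $K$-space $X = Sp\,A$ by affinoid subdomains to the single most elementary covering $X = X(f) \cup X(1/f)$ attached to one function $f \in A$, for which the asserted exactness of the augmented alternating \v{C}ech complex becomes a concrete statement about restricted power series. Concretely, I would invoke the standard hierarchy of coverings: (i) by the Gerritzen--Grauert theorem every affinoid subdomain of $X$ is a finite union of rational subdomains, so every finite affinoid covering is refined by a finite rational covering; (ii) a rational covering attached to $f_0, \dots, f_n \in A$ generating the unit ideal is refined by a Laurent covering built from the ratios $f_i/f_j$ on the members where they are defined; and (iii) a Laurent covering in $r$ functions is built up by iteration from the one-function Laurent coverings $\{X(g), X(1/g)\}$. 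It therefore suffices to prove acyclicity in the one-function case and to check that acyclicity propagates through refinements of these three kinds.

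For the one-function case, the augmented alternating \v{C}ech complex of $X = X(f) \cup X(1/f)$ with values in $\mathcal{O}_X$ is
\[ 0 \longrightarrow A \longrightarrow A\langle f\rangle \oplus A\langle 1/f\rangle \longrightarrow A\langle f, 1/f\rangle \longrightarrow 0, \]
where $A\langle f\rangle = A\langle\xi\rangle/(\xi - f)$, $A\langle 1/f\rangle = A\langle\eta\rangle/(1 - f\eta)$ and $A\langle f, 1/f\rangle = A\langle\xi,\eta\rangle/(\xi - f,\, 1 - f\eta)$. I would prove exactness directly. After normalizing $f$ (using the maximum principle recalled above to control the supremum seminorm), every element of $A\langle f, 1/f\rangle$ admits a unique ``Laurent'' expansion $\sum_{n \in \mathbb{Z}} a_n f^n$ with $a_n \in A$ and $|a_n| \to 0$ as $|n| \to \infty$; splitting such a series at $n = 0$ into its part with $n \ge 0$ (lying in $A\langle f\rangle$) and its part with $n < 0$ (lying in $A\langle 1/f\rangle$) produces a preimage, giving surjectivity of the second map; an element of its kernel is one whose two parts agree in $A\langle f, 1/f\rangle$, forcing both to reduce to a single constant in $A$, which gives exactness in the middle; and injectivity of $A \to A\langle f\rangle \oplus A\langle 1/f\rangle$ is immediate. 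The bookkeeping that the split series converge in the correct Tate algebras over $A$ is the technical heart of this step.

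To propagate acyclicity I would use the refinement lemma: if $\mathfrak{V}$ refines $\mathfrak{U}$ and both $\mathfrak{V}$ and all its restrictions $\mathfrak{V}|_U$ to members $U \in \mathfrak{U}$ are acyclic, then $\mathfrak{U}$ is acyclic; this is proved by the usual double-complex argument applied to the bicomplex $\check{C}^{\bullet}(\mathfrak{U},\, \check{C}^{\bullet}(\mathfrak{V}, \mathcal{O}_X))$. Fed the one-function case, this lemma handles $r$-function Laurent coverings by induction on $r$ (equivalently, the relevant \v{C}ech complexes are completed tensor products over $A$ of the one-variable complexes, whose transition maps are flat, so exactness is preserved), then rational coverings via (ii), and finally arbitrary affinoid coverings via Gerritzen--Grauert as in (i). Since all the classes of coverings that appear are stable under restriction to affinoid subdomains, the d\'evissage can be carried along together with all such restrictions, which is exactly what is needed for $\mathcal{O}_X$ to be a sheaf.

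I expect the main obstacle to be one of two things, according to how self-contained one wishes the argument to be. If the Gerritzen--Grauert theorem is granted, the real work is the explicit exactness of the one-function \v{C}ech sequence: making the Laurent decomposition converge in $A\langle f\rangle$ and $A\langle 1/f\rangle$ requires precise norm estimates, and one must first pin down the structure of these completed localizations --- for instance that $A\langle 1/f\rangle$ vanishes precisely when $X(1/f) = \emptyset$, and how the Gauss norm interacts with the quotient presentations. If instead one insists on a fully self-contained proof, then Gerritzen--Grauert itself --- the assertion that every affinoid subdomain is a finite union of rational ones --- is the genuinely hard theorem and dominates the entire argument.
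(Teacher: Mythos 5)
This statement is Tate's acyclicity theorem, quoted in the paper from Bosch (Theorem 4.3.10 there) without any proof, so there is no internal argument to compare against; your d\'evissage --- Gerritzen--Grauert to pass from arbitrary finite affinoid coverings to rational ones, then to Laurent and finally to one-function Laurent coverings $X = X(f)\cup X(1/f)$, glued by the double-complex refinement lemma and settled by the explicit Laurent-decomposition computation --- is exactly the standard proof given in Bosch and in BGR. The one imprecision worth flagging is that the expansion $\sum_{n\in\mathbb{Z}}a_n f^n$ of an element of $A\langle f,1/f\rangle$ need not be unique (only existence of such a decomposition is needed for surjectivity, and middle exactness is a separate computation with the presentations $A\langle\xi\rangle/(\xi-f)$, $A\langle\eta\rangle/(1-f\eta)$), but this does not affect the correctness of the outline.
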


\begin{definition}
A \textit{rigid analytic $K$-space} is a locally $G$-ringed space $(X,\mathcal{O}_{X})$ such that

(i) the Grothendieck topology of $X$ satisfies $(G_{0}),(G_{1})$ and $(G_{2})$ of 5.1/5 in \cite{Bosch}.

(ii) $X$ admits an admissible covering $(X_{i})_{i \in I}$ where $(X_{i}, \mathcal{O}_{X}|_{X_{i}})$ is an affinoid $K$-space for all $i \in I$.
\end{definition}

A \textit{morphism of rigid $K$-spaces} $(X,\mathcal{O}_{X}) \longrightarrow (Y, \mathcal{O}_{Y})$ is a morphism in the sense of locally $G$-ringed $K$-spaces. As usual, global rigid $K$-spaces can be constructed by gluing local ones.

Now there is a functor that associates any $K$-scheme $Z$ of locally of finite type to a rigid $K$-space $Z^{an}$, called the \textit{rigid analytification} of $Z$. For more details, see \cite{Bosch}.

\begin{proposition}(Proposition 5.4.4 \cite{Bosch})\label{proposition 2.10}
Every $K$-scheme $Z$ of locally finite type admits an analytification $Z^{an} \longrightarrow Z$.
Furthermore, the underlying map of sets identifies the points of $Z^{an}$ with the closed points of $Z$.
\end{proposition}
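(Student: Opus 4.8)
The plan is to carry out the construction essentially as in \cite{Bosch}: reduce to affine space, build $(\A^n_K)^{an}$ explicitly as an admissible increasing union of Tate balls, establish its universal property using the maximum principle as the only nonformal input, and then pass to closed subschemes and to arbitrary schemes locally of finite type by gluing.

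First I would construct the analytification of affine space. Fix $\pi \in K$ with $0 < |\pi| < 1$ (possible since the absolute value is nontrivial), and for $m \geq 0$ let $\B_m := \mathrm{Sp}\, K\langle \pi^{m}\xi_1,\dots,\pi^{m}\xi_n\rangle$ be the closed ball of radius $|\pi|^{-m}$. The obvious $K$-algebra maps realize $\B_m$ as a Weierstrass domain in $\B_{m+1}$, hence an admissible open, and I declare $\{\B_m\}_{m \geq 0}$ an admissible covering of the set-theoretic union. After checking the axioms $(G_0)$--$(G_2)$ for the induced Grothendieck topology, Tate's acyclicity theorem shows that the affinoid structure sheaves on the $\B_m$ glue to a sheaf $\sO$, so that $(\A^n_K)^{an} := \bigcup_m \B_m$ is a rigid $K$-space; the inclusions $K[\xi_1,\dots,\xi_n] \hookrightarrow \sO(\B_m)$ assemble into a morphism $(\A^n_K)^{an} \to \A^n_K$ of locally $G$-ringed spaces.

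Next I would prove the universal property of $(\A^n_K)^{an}$, after which everything is formal. Given a rigid $K$-space $Y$ and a morphism $\varphi : Y \to \A^n_K$ of locally $G$-ringed $K$-spaces, $\varphi$ is the same datum as $n$ global sections $f_1,\dots,f_n \in \sO_Y(Y)$. On any affinoid $V = \mathrm{Sp}\,A$ in an admissible affinoid cover of $Y$ each $f_i|_V$ lies in $A$, so by the maximum principle $|f_i|_{V,\sup}$ is finite and attained, whence $\varphi(V) \subseteq \B_m$ for $m$ large; this gives $V \to \B_m \hookrightarrow (\A^n_K)^{an}$, and these maps agree on overlaps and glue to the unique morphism $Y \to (\A^n_K)^{an}$ over $\A^n_K$ (unique because determined by the same $n$ functions). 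For a closed subscheme $Z = V(I) \hookrightarrow \A^n_K$ I would take $Z^{an}$ to be the closed rigid subspace cut out by the coherent ideal sheaf generated by $I$ (coherent since each $\sO(\B_m)$ is Noetherian); combining the universal property of $(\A^n_K)^{an}$ with the observation that a morphism to $\A^n_K$ factors through $Z$ exactly when its defining functions kill $I$, one gets the universal property of $Z^{an} \to Z$, hence its independence up to canonical isomorphism of the chosen embedding. For general $Z$ locally of finite type, I would cover $Z$ by affine opens $Z = \bigcup_i U_i$, analytify each $U_i$, use that basic opens analytify to admissible opens so that $(U_i \cap U_j)^{an}$ embeds as an admissible open in both $U_i^{an}$ and $U_j^{an}$, and glue the $U_i^{an}$ along the resulting cocycle of isomorphisms; checking that the images form an admissible covering legitimizes the gluing and produces $Z^{an} \to Z$ with the universal property, whence functoriality in $Z$.

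It remains to identify the points, and for this it suffices to treat $Z = \Spec B$ affine with $B = K[\xi_1,\dots,\xi_n]/I$. The closed points of $\Spec B$ are precisely the maximal ideals with residue field finite over $K$ (Nullstellensatz for finitely generated $K$-algebras), and the points of $Z^{an}$, being maximal ideals of the affinoid algebras $\sO(\B_m \cap Z^{an})$, likewise have residue fields finite over $K$ (affinoid Nullstellensatz). A maximal ideal $\mathfrak{n}$ of $B$ with residue field $L/K$ finite has the property that the images of the $\xi_i$ in $L$ have bounded absolute value, so $B \to L$ factors through $\sO(\B_m \cap Z^{an})$ for $m$ large; conversely every point of $Z^{an}$ restricts to such an $\mathfrak{n}$, and the two assignments are mutually inverse, giving the desired bijection of $|Z^{an}|$ with the closed points of $Z$ compatibly over $Z$. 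The step I expect to be the genuine obstacle is the bookkeeping in the second and third paragraphs: verifying that the ad hoc coverings --- first $\{\B_m\}$, then the glued $\{U_i^{an}\}$ --- satisfy the admissibility axioms $(G_0)$--$(G_2)$ and are covered by Tate's acyclicity theorem, so that the presheaves actually sheafify and the resulting objects are honest rigid $K$-spaces; the universal property, which is the conceptual heart, reduces cleanly to the maximum principle.
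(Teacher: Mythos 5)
This proposition is quoted from Bosch (Proposition 5.4.4 of \cite{Bosch}) and the paper supplies no proof of its own; your proposal reconstructs exactly the standard argument given there --- analytifying $\A^n_K$ as an admissible increasing union of closed balls, proving the universal property via the maximum principle, descending to closed subschemes through coherent ideals, gluing over an affine cover, and matching points by the two Nullstellens\"atze. The argument is correct and follows essentially the same route as the cited source.
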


\begin{remark}
When $X$ is a variety over $K$, one can identify the points of $X^{an}$ with the points of $X$.
\end{remark}

\section{Vector bundle case}  
  
 \subsection{Lie algebra of Deninger-Werner's map $\alpha$}
   Suppose that $X$ is a curve with good reduction over $\overline{\mathbb{Q}_{p}}$ (i.e. there exists a smooth, proper, finitely presented model $\mathcal{X}$ of $X$ over $\overline{\mathbb{Z}_{p}}$) and denote the genus of $X$ by $g$. By \cite{Deninger and Werner 2005a} we obtain a continuous, Galois equivariant homomorphism
    \[ \alpha: Pic^{0}_{X/\overline{\mathbb{Q}_{p}}}(\mathbb{C}_{p}) \longrightarrow Hom_{c}(\pi_{1}^{ab}(X), \mathbb{C}_{p}^{*}). \]
    
   For the left-hand side, denote the Albanese variety of $X$ by $A$. Then the dual abelian variety $\widehat{A}$ of $A$ is isomorphic to $Pic^{0}_{X/\overline{\mathbb{Q}_{p}}}$ via the $\Theta$-polarization. Consequently $\widehat{A}(\mathbb{C}_{p}) =     
Pic^{0}_{X/\overline{\mathbb{Q}_{p}}}(\mathbb{C}_{p})$. For the right-hand side, first we claim that the Tate module $TA$ of the abelian variety $A$ can be identified with the maximal abelian quotient of the \'{e}tale fundamental group $\pi_{1}(X,x)$. In fact, let $f: X \rightarrow A$ be the embedding mapping $x$ to 0. Then $f$ induces the homomorphism 
\[f_{*}: \pi_{1}(X,x) \longrightarrow \pi_{1}(A,0). \]

Note that $\overline{\mathbb{Q}_{p}}$ is an algebraically closed field, Lang-Serre Theorem (refer to \cite{Gerard van der Geer and Ben Moonen}, p.159 Theorem (10.36))  asserts that any connected finite \'{e}tale cover of $A$ is also an abelian variety. It deduces that the \'{e}tale fundamental group of $A$ is canonically isomorphic to its Tate module.
Since $TA \simeq \prod \limits_{p \,prime}\mathbb{Z}_{p}^{2d}$ where $d=dim(A)$, the \'{e}tale fundamental group of $A$ is an abelian group. By universal property of Albanese variety, it follows that the Tate module $TA$ is isomorphic to $\pi_{1}^{ab}(X,x)$ as desired. Since the \'{e}tale fundamental group of $X$ is compact with respect to the profinite topology, we deduce that $\pi_{1}^{ab}(X)$ is compact. Note that $\mathbb{C}_{p}^{*}=p^{\mathbb{Q}} \times \textbf{o}^{*}$ and the fact that every element of the image of 1-dimensional representation of the compact group has norm 1, thus \[Hom(\pi_{1}^{ab}(X),\mathbb{C}_{p}^{*})=Hom(\pi_{1}^{ab}(X),\textbf{o}^{*})=Hom(TA,\textbf{o}^{*}).\]
   From the previous argument, the map $\alpha$ can be rephrased in terms of the Albanese variety $A$ of $X$ as a continuous, Galois equivariant homomorphism
   \begin{align}\label{(3.0.1)}
   \alpha: \widehat{A}(\mathbb{C}_{p}) \longrightarrow Hom_{c}(TA, \textbf{o}^{*}).
   \end{align}
By Corollary 14 of \cite{Deninger and Werner 2005b}, $\alpha$ is a $p$-adically analytic map of $p$-adic Lie groups.
 
   Let us now determine the Lie algebra map induced by $\alpha$.
   
   From \cite{Deninger and Werner 2005b}, we have the exact sequence of the logarithm on the Lie group $\widehat{A}(\mathbb{C}_{p})$
   \[ \xymatrix{0 \ar[r] & \widehat{A}(\mathbb{C}_{p})_{tors} \ar[r] & \widehat{A}(\mathbb{C}_{p}) \ar[r]^-{log} & Lie\widehat{A}(\mathbb{C}_{p})=H^{1}(A,\mathcal{O}) \otimes_{\overline{\mathbb{Q}_{p}}} \mathbb{C}_{p} \ar[r]& 0\\}. \]
   
   On the torsion subgroups, $\alpha$ is an isomorphism (Proposition 10 of \cite{Deninger and Werner 2005b}), so that we get the following commutative diagram with exact lines:
   
   \begin{align}\label{(3.0.2)}{ \xymatrix{0 \ar[r] & \widehat{A}(\mathbb{C}_{p})_{tors} \ar[d]^{\simeq} \ar[r] & \widehat{A}(\mathbb{C}_{p}) \ar[d]^{\alpha} \ar[r]^{log} & Lie\widehat{A}(\mathbb{C}_{p}) \ar[d]^{Lie\,\alpha}\ar[r]& 0\\
      0 \ar[r] & Hom_{c}(TA,\mu) \ar[r] & Hom_{c}(TA,\mathbb{C}_{p}^{*}) \ar[r] & Hom_{c}(TA,\mathbb{C}_{p}) \ar[r] & 0}}\end{align}
    
    Here $\mu$ is the subgroup of the roots of unity in $\mathbb{C}_{p}$.
         
 \subsection{Vector bundles give rise to representations}
   The goal of this section is to construct a rigid analytic morphism
    \[ \alpha^{an}: (Pic^{0}_{X/\overline{\mathbb{Q}_{p}}})^{an}  \longrightarrow
    (\mathbb{G}_{m}^{2g})^{an} \]
    such that the points of $\alpha^{an}$ coincide with the map $\alpha$ as (\ref{(3.0.1)}).
    
    Let us first consider the points of both sides. For the left-hand side, using Remark 2.3.4 we have $(Pic^{0}_{X/\overline{\mathbb{Q}_{p}}})^{an}(\mathbb{C}_{p})=Pic^{0}_{X/\overline{\mathbb{Q}_{p}}}(\mathbb{C}_{p})$. For the right-hand side, one can deduce that $(\mathbb{G}_{m}^{2g})^{an}(\textbf{o})=(\mathbb{G}_{m}^{2g})(\textbf{o})=(\textbf{o}^{*})^{2g}=Hom_{c}(TA,\textbf{o}^{*})$.
    
    To prove that $\alpha^{an}$ exists as a rigid analytic function, it suffices to construct it on an affinoid open subset. We break the proof into the following steps:
   \subsubsection{Step1: Reduction to affinoid neighbourhood}
   The morphism 
   \[\alpha^{an}: (Pic^{0}_{X/\overline{\mathbb{Q}_{p}}})^{an}  \longrightarrow (\mathbb{G}_{m}^{2g})^{an}\] 
   will map $0$ to $1$. On the point level, we have $\alpha^{an}(\mathbb{C}_{p})=\alpha(\mathbb{C}_{p})$. Fix an element $x_{0}\in (Pic^{0}_{X/\overline{\mathbb{Q}_{p}}})^{an}$ and define $y_{0}=\alpha(x_{0})$. Observe that both sides have rigid group structure. Consider the translation maps
   \[ T_{x_{0}}(x)=x+x_{0},\quad T_{y_{0}}(y)=y\cdot y_{0}\]  
   and the following diagram
   \begin{align}\label{(3.0.3)}{\xymatrixcolsep{5pc}\xymatrix{ U_{0} \ar[d]^{T_{x_{0}}} \ar[r]^{\alpha|_{U_{0}}} & U_{1} \ar[d]^{T_{y_{0}}}\\
   U_{x_{0}} \ar[r]^{\alpha|_{U_{x_{0}}}} & U_{y_{0}}} }\end{align}
   where $U_{1}$ is the neighborhood of $1 \in (\mathbb{G}_{m}^{2g})^{an}$ and $U_{0}=\alpha^{-1}(U_{1}),U_{y_{0}}=T_{y_{0}}(U_{1}),U_{x_{0}}=T_{x_{0}}(U_{0}) \cap \alpha^{-1}(U_{y_{0}})$ are open with respect to strong Grothendieck topology.
   
   For any $x \in U_{0}$, we have
   \[ (\alpha|_{U_{x_{0}}}\circ T_{x_{0}})(x)=\alpha(x+x_{0})=\alpha(x) \cdot \alpha(x_{0})=(T_{y_{0}}\circ \alpha|_{U_{0}})(x) \]
   Hence
   \[ \alpha|_{U_{x_{0}}}\circ T_{x_{0}}=T_{y_{0}}\circ \alpha|_{U_{0}}.\]
Namely, the above diagram is commutative.
 
   It suffices to prove that $\alpha|_{U_{0}}$ is rigid analytic. Then $\alpha|_{U_{x_{0}}}=T_{y_{0}}\circ \alpha|_{U_{0}}\circ T_{-x_{0}}$ is rigid analytic. Assume that $\alpha$ is continuous with respect to  strong Grothendieck topology. It yields that $\alpha^{an}$ is rigid analytic by gluing the rigid analytic morphisms. 
   
   The techinique step is to shrink $U_{0}$ to an open affinoid neighborhood in order to give the natural transformation of the sheaf of rigid analytic functions.
   
   Before shrinking, let us construct logarithm on rigid analytic space in the following diagram.
   
   \[ \xymatrixcolsep{5pc}\xymatrix{(Pic^{0}_{X/\overline{\mathbb{Q}_{p}}})^{an} \ar[d] \ar[r]^{log} & H^{1}(A,\mathcal{O}) \otimes \mathbb{G}_{a} \ar[d]^{l=(Lie\alpha)^{an}}\\
   (\mathbb{G}_{m}^{2g})^{an} \ar[r]^{log} & (\mathbb{G}_{a}^{2g})^{an}} \]
   
   Here we identify $Lie\, \textbf{o}^{*}$ with $\textbf{o}$ by means of the invariant differential $\frac{dT}{T}$ on $\mathbb{G}_{m}=Spec\, \textbf{o}[T,T^{-1}]$ and $Lie\, Pic^{0}_{X/\overline{\mathbb{Q}_{p}}}(\mathbb{C}_{p})=Lie\, \widehat{A}(\mathbb{C}_{p})=H^{1}(A,\mathcal{O})\otimes_{\overline{\mathbb{Q}_{p}}} \mathbb{C}_{p}$.   
   
   (\romannumeral 1) $log: (Pic^{0}_{X/\overline{\mathbb{Q}_{p}}})^{an} \longrightarrow H^{1}(A,\mathcal{O}) \otimes \mathbb{G}_{a}$  
   
   \vspace{0.2cm}
   We start with the definition of the rigid analytic $p$-divisible group introduced by Fargues \cite{Fargues}. A $p$-divisible group is defined as $p$-torsion, $p$-divisible such that $ker[p]$ is a finite group scheme by \cite{Bergamaschi} (line 7-9 of p.8). The motivation is to release the "$p$-torsion" condition of $p$-divisible group by replacing it with a condition of "topological $p$-torsion". Thus the rigid analytic $p$-divisible group can be viewed as a generalization of $p$-divisible group in rigid analytic setting.
   
   Let $K/\mathbb{Q}_{p}$ be a complete valued field.
   \begin{definition}
   A commutative rigid analytic $K$-group $G$ is said to be \textit{rigid analytic $p$-divisible group} if
   
   (i) (Topologically $p$-torsion\footnote{The notion "topologically $p$-torsion" is in the sense of Berkovich space. The Berkovich spectrum can be viewed as consisting of the data of prime ideal plus the extension of the norm to the residue field. Thus the Berkovich space $|G^{Berk}|$ has far more points than the rigid analytic space $|G^{an}|$.})
   
   For any $g \in G^{Berk}$, $\lim \limits_{n \rightarrow \infty} p^{n}g=0$ with respect to the topology of the Berkovich space $|G^{Berk}|$. Equivalently, if $U$ and $V$ are two affinoid neighborhoods of 0, then for $n \gg 0, p^{n}U \subset V$.
   
   (ii) The morphism $\times p: G \longrightarrow G$ is finite and surjective.
   \end{definition}
   Let $G$ be a rigid analytic $p$-divisible group. By Proposition 16 of \cite{Fargues}, there is a natural logarithm morphism of rigid analytic spaces
   \begin{align}\label{(3.2.2)} {log_{G}: G \longrightarrow Lie(G) \otimes \mathbb{G}_{a}}. \end{align}
  
   Assume that $(Pic^{0}_{X/\overline{\mathbb{Q}_{p}}})^{an}$ is topologically $p$-torsion. Note that $Pic^{0}_{X/\overline{\mathbb{Q}_{p}}}$ is an abelian variety, it follows that $[p]: Pic^{0}_{X/\overline{\mathbb{Q}_{p}}} \longrightarrow Pic^{0}_{X/\overline{\mathbb{Q}_{p}}}$ is an isogeny. The morphism $\times p: (Pic^{0}_{X/\overline{\mathbb{Q}_{p}}})^{an} \longrightarrow (Pic^{0}_{X/\overline{\mathbb{Q}_{p}}})^{an}$ is finite and surjective by Lemma 5 of \cite{Fargues}, we deduce that $(Pic^{0}_{X/\overline{\mathbb{Q}_{p}}})^{an}$ is a rigid analytic $p$-divisible group. Thus the logarithm defined in (\ref{(3.2.2)}) induces the logarithm of rigid analytic spaces
   \[ log: (Pic^{0}_{X/\overline{\mathbb{Q}_{p}}})^{an} \longrightarrow H^{1}(A,\mathcal{O}) \otimes \mathbb{G}_{a}. \]
   
   The $\mathbb{C}_{p}$-points of $H^{1}(A,\mathcal{O}) \otimes \mathbb{G}_{a}$ are $H^{1}(A,\mathcal{O}) \otimes_{\overline{\mathbb{Q}_{p}}} \mathbb{C}_{p}$. Consequently the $\mathbb{C}_{p}$-points of $log$ coincide with the logarithm on the Lie group $Pic^{0}_{X/\overline{\mathbb{Q}_{p}}}(\mathbb{C}_{p})$. Moreover, $log: (Pic^{0}_{X/\overline{\mathbb{Q}_{p}}})^{an} \longrightarrow H^{1}(A,\mathcal{O}) \otimes \mathbb{G}_{a}$ is a local isomorphism (i.e, there exist open subsets $U \subset (Pic^{0}_{X/\overline{\mathbb{Q}_{p}}})^{an}$ and $V \subset H^{1}(A,\mathcal{O}) \otimes \mathbb{G}_{a}$ such that $U \simeq V$).
   
   (\romannumeral 2) $log: (\mathbb{G}_{m}^{2g})^{an} \longrightarrow (\mathbb{G}_{a}^{2g})^{an}$ 
   
   \vspace{0.2cm}
   Note that topologically $p$-torsion elements of $\mathbb{G}_{m}^{an}$ is $\widehat{\mathbb{G}}_{m}^{an}:=\{ x\in \mathbb{G}_{m}^{an}:|x-1|<1 \}$ (Example 5(1) of \cite{Fargues} on p.17), we deduce that the logarithm $log: \mathbb{G}_{m}^{an} \longrightarrow \mathbb{G}_{a}^{an}$ is defined on $\widehat{\mathbb{G}}_{m}^{an}$. By (\ref{(3.2.2)}), the logarithm $log: (\mathbb{G}_{m}^{2g})^{an} \longrightarrow (\mathbb{G}_{a}^{2g})^{an}$ is defined on the open set of topologically $p$-torsion points of $(\mathbb{G}_{m}^{2g})^{an}$. 
       
   For the $\textbf{o}$-points, we have 
   \begin{align}\label{(3.0.4)} {log: (\mathbb{G}_{m}^{2g})^{an}(\textbf{o})=(\textbf{o}^{*})^{2g} \longrightarrow \textbf{o}^{2g}=(\mathbb{G}_{a}^{2g})^{an}(\textbf{o})}\end{align}
   
   Consider each component $log: \textbf{o}^{*} \longrightarrow \textbf{o}$, set $V_{0}=\{x \in \textbf{o}^{*}: |x-1|<p^{-\frac{1}{p-1}} \}$ and $W_{0}=\{ x \in \mathbb{C}_{p}: |x|<p^{-\frac{1}{p-1}} \}$. The logarithm provides an isomorphism
   \[ log: V_{0} \longrightarrow W_{0} \]
whose inverse is the exponential map. It follows that the logarithm sending $1$ to $0$ is a local homeomorphism. Taking product topology on both sides (\ref{(3.0.4)}), we have $log: (\mathbb{G}_{m}^{2g})^{an} \longrightarrow (\mathbb{G}_{a}^{2g})^{an}$ is a local homeomorphism(power series converges and hence continuous).   
   
   As a byproduct, we give the explicit expression of restriction of $\alpha$ on affinoid neighborhood.   
   \begin{definition}
   The rigid analytic space $(X,\mathcal{O}_{X})$ is \textit{reduced} if for any admissible open subset $U\subset X$, $\mathcal{O}_{X}(U)$ has no nilpotent elements.
   \end{definition}
   
   \begin{definition}
   A morphism $f: X\longrightarrow Y$ of rigid analytic spaces is called an \textit{affinoid morphism} if there is an admissible affinoid covering $\{V_{i}\}$ of $Y$ such that $f^{-1}(V_{i})$ is an affinoid space for each $i$.
   \end{definition}
      
   \begin{proposition}
   Let $K$ be a nonarchimedean field of char 0. If $X$ and $Y$ are rigid analytic spaces over $K$ and $X$ is reduced, then for any affinoid morphism $h: X \longrightarrow Y$, $h(\widehat{\overline{K}})=0$ implies $h=0$.   
   \end{proposition}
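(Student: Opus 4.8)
The plan is to reduce everything to affinoid coordinate rings and then to combine the maximum principle (Theorem 2.1.4) with the fact that an element of an affinoid algebra is nilpotent precisely when its supremum seminorm vanishes (Remark 2.1.3); the reducedness of $X$ is exactly what promotes ``nilpotent'' to ``zero''. We may assume $X\neq\emptyset$, so that the point $0$ under consideration lies in the image of $h$. Since $h$ is an affinoid morphism (Definition 3.2.3), fix an admissible affinoid covering $\{V_i\}_{i\in I}$ of $Y$ such that each $U_i:=h^{-1}(V_i)$ is affinoid; then $\{U_i\}_{i\in I}$ is an admissible covering of $X$, so by the sheaf property for morphisms it suffices to prove $h|_{U_i}=0$ for every $i$. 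Fix $i$ with $U_i\neq\emptyset$, write $A_i=\mathcal{O}_X(U_i)$ and $B_i=\mathcal{O}_Y(V_i)$, and let $h^{*}\colon B_i\to A_i$ be the corresponding $K$-algebra map. Every closed point of $U_i$ has residue field finite over $K$, hence embeddable over $K$ into $\widehat{\overline{K}}$, so each closed point of $U_i$ underlies a $\widehat{\overline{K}}$-point of $X$; since $h(U_i)\subseteq V_i$ this forces $0\in V_i$, and we let $\mathfrak{m}_0\subset B_i$ denote its maximal ideal.

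Next I would prove $h^{*}(\mathfrak{m}_0)=0$. Let $f\in\mathfrak{m}_0$ and let $x\in\mathrm{Max}\,A_i$ be an arbitrary closed point of $U_i$. Choosing a $K$-embedding $\kappa(x)=A_i/\mathfrak{m}_x\hookrightarrow\widehat{\overline{K}}$ produces a point $\tilde{x}\in X(\widehat{\overline{K}})$ lying over $x$; by hypothesis $h(\tilde{x})=0$, whence $(h^{*}f)(\tilde{x})=f(h(\tilde{x}))=f(0)=0$. Since $(h^{*}f)(\tilde{x})$ is the image of the residue class $(h^{*}f)(x)\in\kappa(x)$ under the chosen embedding, it follows that $(h^{*}f)(x)=0$. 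As $x$ ranges over all closed points of $U_i$ we get $|h^{*}f|_{sup}=0$, so by Remark 2.1.3 the element $h^{*}f$ is nilpotent in $A_i$; but $X$ is reduced (Definition 3.2.2), so $A_i=\mathcal{O}_X(U_i)$ has no nonzero nilpotents and $h^{*}f=0$.

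Consequently $\mathfrak{m}_0\subseteq\ker h^{*}$, so $h^{*}$ factors through $B_i/\mathfrak{m}_0=\kappa(0)$; equivalently $h|_{U_i}$ factors through the closed point $0$ of $Y$, i.e. $h|_{U_i}=0$. (In the intended applications $0$ is the identity of a rigid analytic $K$-group such as $(\mathbb{G}_{m}^{2g})^{an}$ or $(\mathbb{G}_{a}^{2g})^{an}$, so $\kappa(0)=K$ and $h^{*}$ is simply the composite of the augmentation at $0$ with the structure morphism $K\to A_i$.) Running this over all $i\in I$ and gluing the local statements over the admissible covering $\{U_i\}_{i\in I}$ yields $h=0$.

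The one genuinely substantive point is the reduction carried out in the second paragraph, from ``$h$ vanishes on all $\widehat{\overline{K}}$-points'' to ``$h^{*}f$ vanishes at every closed point of the affinoid $U_i$'': this rests on the fact that the residue fields of an affinoid $K$-algebra at its maximal ideals are finite over $K$, hence embed into $\widehat{\overline{K}}$, so that the $\widehat{\overline{K}}$-points already compute the supremum seminorm on $A_i$. Once that is in hand, the maximum principle and the nilpotence criterion finish the argument, with the reducedness hypothesis supplying precisely the step that upgrades nilpotence of $h^{*}f$ to its vanishing; the characteristic-zero hypothesis plays no essential role in the argument above but does no harm.
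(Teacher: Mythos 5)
Your proof is correct and follows essentially the same route as the paper: reduce to the affinoid case via the affinoid-morphism hypothesis, show the pullback of functions vanishing at $0$ has zero supremum seminorm, and use reducedness to upgrade nilpotence to vanishing. The only difference is one of detail — you make explicit the step of lifting each closed point to a $\widehat{\overline{K}}$-point via the finiteness of residue fields, where the paper passes directly to a point realizing the supremum via the maximum principle — but the mechanism is the same.
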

   \begin{proof}
   Let $Y=\cup _{i\in I} Sp B_{i}$ be an admissible affinoid covering. Since $h$ is an affinoid morphism, we obtain $h^{-1}(Sp B_{i})=Sp A_{i}$ for some affinoid algebra $A_{i}$. So we reduce to the affinoid case. 
   
   Let the corresponding morphism of affinoid algebras $B\longrightarrow A$ send $b$ to $a$. By maximum principle Theorem 2.1.4, there is a point $x\in Max\,A$ such that $|a|_{sup}=|a(x)|$. Let $\varphi:A\longrightarrow \mathbb{C}_{p}$ be the residue map of $x \in Max\,A$. By assumption, $h(\mathbb{C}_{p})=0$. From the    
commutative diagram 
   \[ \xymatrix{B \ar[d] \ar[r]^{0} & \textbf{o}\ar[d]\\
   A \ar[r]^{\varphi} & \mathbb{C}_{p}}\]    
one can deduce that $|a|_{sup}=|a(x)|=0$. Since $(Pic^{0}_{X/\overline{\mathbb{Q}_{p}}})^{an}$ is reduced, it follows that $A$ is reduced. Hence $a=0$ and $h=0$ as desired. 
   \end{proof}
   
   Since $X$ is a smooth projective curve, we have $Pic^{0}_{X/\overline{\mathbb{Q}_{p}}}$ is reduced. It follows that $(Pic^{0}_{X/\overline{\mathbb{Q}_{p}}})^{an}$ is reduced. The next corollary gives the explicit expression of $\alpha|_{U_{0}}$.
   
   \begin{corollary}
   Suppose that $\alpha|_{U_{0}}-exp \circ l \circ log$ is an affinoid morphism, we have $\alpha|_{U_{0}}=exp \circ l \circ log$.
   \end{corollary}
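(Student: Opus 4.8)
The plan is to deduce the statement directly from the preceding proposition, applied to the morphism $h:=\alpha|_{U_{0}}-exp\circ l\circ log$. Two of its hypotheses are already available: by assumption $h$ is an affinoid morphism, and since $X$ is a smooth projective curve we have observed that $Pic^{0}_{X/\overline{\mathbb{Q}_{p}}}$, hence $(Pic^{0}_{X/\overline{\mathbb{Q}_{p}}})^{an}$, is reduced. Taking $K=\overline{\mathbb{Q}_{p}}$, so that $\widehat{\overline{K}}=\mathbb{C}_{p}$ and the characteristic is $0$, the preceding proposition reduces the claim to showing that $h$ vanishes on $\mathbb{C}_{p}$-points, i.e. that $\alpha(x)=(exp\circ l\circ log)(x)$ for every $x\in U_{0}(\mathbb{C}_{p})$.

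This $\mathbb{C}_{p}$-point identity is the content of the commutative diagram $(\ref{(3.0.2)})$ together with the invertibility of the exponential near the identity. By Corollary 14 of \cite{Deninger and Werner 2005b} the map $\alpha$ of $(\ref{(3.0.1)})$ is a $p$-adically analytic homomorphism of $p$-adic Lie groups. On $\mathbb{C}_{p}$-points, the rigid analytic logarithm $log\colon (Pic^{0}_{X/\overline{\mathbb{Q}_{p}}})^{an}\to H^{1}(A,\mathcal{O})\otimes\mathbb{G}_{a}$ restricts to the Lie-group logarithm of $\widehat{A}(\mathbb{C}_{p})$ appearing in the top row of $(\ref{(3.0.2)})$; the $\mathbb{G}_{m}^{2g}$-side logarithm restricts to the logarithm in the bottom row; and $l=(Lie\,\alpha)^{an}$ restricts to $Lie\,\alpha$. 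Functoriality of the logarithm for the analytic homomorphism $\alpha$ gives $log\circ\alpha=Lie\,\alpha\circ log$ near $0$, and composing on the left with the exponential — the local inverse of $log$ near $1$ on the $\mathbb{G}_{m}^{2g}$-side, over the polydisc $V_{0}^{2g}\leftrightarrow W_{0}^{2g}$ constructed above — yields $\alpha=exp\circ Lie\,\alpha\circ log=exp\circ l\circ log$ on a neighborhood of $0$ in $(Pic^{0}_{X/\overline{\mathbb{Q}_{p}}})^{an}(\mathbb{C}_{p})$. Since $U_{0}$ may be taken inside this neighborhood, $h(\mathbb{C}_{p})=0$.

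With the vanishing on $\mathbb{C}_{p}$-points in hand, the preceding proposition applies verbatim and gives $h=0$, i.e. $\alpha|_{U_{0}}=exp\circ l\circ log$. The step that needs the most care is the second paragraph: one has to shrink $U_{0}$ far enough that the $\mathbb{G}_{m}^{2g}$-side exponential converges on the image of $l\circ log$ and that both logarithms are honest local isomorphisms there, so that what is a priori only a local Lie-theoretic equality of maps on $\mathbb{C}_{p}$-points can legitimately be compared with the affinoid morphism $h$; with the smallness already built into the constructions of this subsection (notably the discs $V_{0}$ and $W_{0}$), the remaining verifications are routine.
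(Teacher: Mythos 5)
Your proposal is correct and follows essentially the same route as the paper: establish the equality $\alpha|_{U_{0}}(\mathbb{C}_{p})=exp\circ l\circ log(\mathbb{C}_{p})$ from the commutative diagram (\ref{(3.0.2)}) and then invoke Proposition 3.2.4 applied to the difference, using the reducedness of $(Pic^{0}_{X/\overline{\mathbb{Q}_{p}}})^{an}$ and the affinoid hypothesis. You merely spell out in more detail (the local invertibility of the exponential and the shrinking of $U_{0}$) what the paper compresses into a one-line citation of the diagram.
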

   \begin{proof}
   From the commutative diagram (\ref{(3.0.2)}), we have \[\alpha|_{U_{0}}(\mathbb{C}_{p})=exp \circ l \circ log(\mathbb{C}_{p}).\] By Proposition 3.2.4, it follows that $\alpha|_{U_{0}}=exp \circ l \circ log$.
   \end{proof}

   \vspace{0.2cm}
   Now we turn to the shrinking step.
   
   The logarithm $log: (\mathbb{G}_{m}^{2g})^{an} \longrightarrow (\mathbb{G}_{a}^{2g})^{an}$ induces the isomorphism 
   \[ log: V_{0}=\{x \in \textbf{o}^{*}: |x-1|<r_{p}:=p^{-\frac{1}{p-1}}\}  \longrightarrow W_{0}=\{ x \in \mathbb{C}_{p}: |x|<p^{-\frac{1}{p-1}}\}. \]
   
   First we shrink $V_{0}$ and $W_{0}$ to affinoid space. Choose $\varepsilon \in \mathbb{R}_{>0}$ such that $r_{p}^{\prime}=r_{p}-\varepsilon \in |\mathbb{C}_{p}^{\times}|=|p|^{\mathbb{Q}}$, and thus $V_{0}^{\prime}:=\{x\in \textbf{o}^{*}: |x-1| \leq r_{p}^{\prime} \}\subset V_{0}$ is an affinoid space. Set $W_{0}^{\prime}:=\{ x\in \mathbb{C}_{p}: |x|\leq r_{p}^{\prime}\}\subset W_{0}$ and $W_{0}^{\prime}$ is again an affinoid space. Let $x\in V_{0}$ and $y=x-1$. It follows that $|log(x)|=|log(1+y)|=|y|\,\, for\, |y|<r_{p}$. Hence $log(V_{0}^{\prime})=W_{0}^{\prime}$ and the logarithm provides the isomorphism
   \[ log: V_{0}^{\prime} \longrightarrow W_{0}^{\prime}. \]
    
Now consider the logarithm $log: (Pic^{0}_{X/\overline{\mathbb{Q}_{p}}})^{an} \longrightarrow H^{1}(A,\mathcal{O}) \otimes \mathbb{G}_{a}$. Find an affinoid neighborhood $Sp A\subset H^{1}(A,\mathcal{O}) \otimes \mathbb{G}_{a}$ of $0$ and shrink the inverse image $log^{-1}(Sp A)$ to an affinoid neighborhood $Sp B$ of $0\in (Pic^{0}_{X/\overline{\mathbb{Q}_{p}}})^{an}$. For the morphism of affinoid spaces $\varphi=log|_{Sp B}: Sp B \longrightarrow Sp A$, let it correspond to a morphism of affinoid algebras $\varphi^{*}: A \longrightarrow B$. Since $log$ is a local isomorphism, there are admissble open subsets $V_{1}\subset Sp B$ and $W_{1} \subset Sp A$ such that $V_{1} \simeq W_{1}$. Let $W_{1}^{\prime}:=W_{1}\cap l^{-1}(W_{0}^{\prime})$ and $V_{1}^{\prime}:=log^{-1}(W_{1}\cap l^{-1}(W_{0}^{\prime}))$, then $V_{1}^{\prime} \simeq W_{1}^{\prime}$. Note that the canonical topology is finer than the Grothendieck topology on affinoid spaces, applying Proposition 2.2.3, there exist $f_{1},\dots,f_{r} \in A$ such that $(Sp A)(f_{1},\dots,f_{r}) \subset W_{1}^{\prime}$. Furthermore, we have
\[ \varphi^{-1}((Sp A)(f_{1},\dots,f_{r}))=(Sp B)(\varphi^{*}(f_{1}),\dots,\varphi^{*}(f_{r}))\]
Observe that $(Sp A)(f_{1},\dots,f_{r})$ and $(Sp B)(\varphi^{*}(f_{1}),\dots,\varphi^{*}(f_{r}))$ are Weierstrass domains, thus open affinoids. Since $\varphi$ is continuous with respect to Grothendieck topology (Proposition 3.1.6 of \cite{Bosch}), it follows that there is a homeomorphism between $(Sp B)(\varphi^{*}(f_{1}),\dots,\varphi^{*}(f_{r}))$ and $(Sp A)(f_{1},\dots,f_{r})$. We finish the shrinking step.

  Let $U_{1}=V_{0}^{\prime}\subset (\mathbb{G}_{m}^{2g})^{an}$ in the diagram (\ref{(3.0.3)}). Assume that \[U_{0}=(Sp B)(\varphi^{*}(f_{1}),\dots,\varphi^{*}(f_{r}))\] after shrinking. Note that $\alpha^{an}|_{U_{0}}: U_{0} \longrightarrow U_{1}$ is a morphism of affinoid spaces and the logarithm provides the local homeomorphism, the morphism \[l: H^{1}(A,\mathcal{O})\otimes \mathbb{G}_{a} \longrightarrow (\mathbb{G}_{a}^{2g})^{an}\] gives the morphism of corresponding affinoid algebras from the commutative diagram (\ref{(3.0.2)}) on the point level. Using Lemma 5.3.3 of \cite{Bosch}, we deduce that $\alpha^{an}|_{U_{0}}$ is rigid analytic as desired. 
  
  Now we have proved the following theorem.   
  
  \begin{theorem}
  Assume that $(Pic^{0}_{X/\overline{\mathbb{Q}_{p}}})^{an}$ is topologically $p$-torsion. Then we may enhance the set-theoretical map
  \[ \alpha: Pic^{0}_{X/\overline{\mathbb{Q}_{p}}}(\mathbb{C}_{p}) \longrightarrow Hom_{c}(\pi_{1}^{ab}(X), \mathbb{C}_{p}^{*}) \] to the rigid analytic morphism
  \[ \alpha^{an}: (Pic^{0}_{X/\overline{\mathbb{Q}_{p}}})^{an}  \longrightarrow
    (\mathbb{G}_{m}^{2g})^{an} \]  
  \end{theorem}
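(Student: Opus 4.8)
The plan is to construct $\alpha^{an}$ first on an affinoid neighbourhood of the identity and then to spread it out over all of $(Pic^{0}_{X/\overline{\mathbb{Q}_{p}}})^{an}$ by means of the group laws on source and target. Both spaces carry a rigid group structure, so, writing $y_{0}=\alpha(x_{0})$ and using the translations $T_{x_{0}}(x)=x+x_{0}$ and $T_{y_{0}}(y)=y\cdot y_{0}$ together with the identity $\alpha(x+x_{0})=\alpha(x)\cdot\alpha(x_{0})$, the square~(\ref{(3.0.3)}) commutes; hence it is enough to produce a rigid analytic morphism $\alpha^{an}|_{U_{0}}\colon U_{0}\to U_{1}$ on some admissible open $U_{0}\ni 0$ whose $\mathbb{C}_{p}$-points recover $\alpha$ (the identification of $\mathbb{C}_{p}$-points being Remark~2.3.4). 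Granting such a local morphism, I would set $\alpha^{an}|_{T_{x_{0}}(U_{0})}:=T_{y_{0}}\circ\alpha^{an}|_{U_{0}}\circ T_{-x_{0}}$; the translates $T_{x_{0}}(U_{0})$ cover the group and, a finite subcollection already covering by quasi-compactness of $(Pic^{0}_{X/\overline{\mathbb{Q}_{p}}})^{an}$, form an admissible covering, the local pieces agree on overlaps because $\alpha$ is a homomorphism and $(Pic^{0}_{X/\overline{\mathbb{Q}_{p}}})^{an}$ is reduced with dense rigid points, and gluing yields the global $\alpha^{an}$.

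For the local construction I would pass to Lie algebras. Since $(Pic^{0}_{X/\overline{\mathbb{Q}_{p}}})^{an}$ is topologically $p$-torsion by hypothesis and $[p]$ is an isogeny of the abelian variety $Pic^{0}_{X/\overline{\mathbb{Q}_{p}}}$, Lemma~5 of~\cite{Fargues} makes it a rigid analytic $p$-divisible group, so Proposition~16 of~\cite{Fargues} supplies a logarithm $\log\colon(Pic^{0}_{X/\overline{\mathbb{Q}_{p}}})^{an}\to H^{1}(A,\mathcal{O})\otimes\mathbb{G}_{a}$ that is a local isomorphism at $0$; on the target side the classical $p$-adic logarithm is a local isomorphism from the topologically $p$-torsion locus of $(\mathbb{G}_{m}^{2g})^{an}$ onto $(\mathbb{G}_{a}^{2g})^{an}$, with inverse $\exp$ on the polydisc $W_{0}^{2g}$, where $W_{0}=\{|x|<p^{-1/(p-1)}\}$. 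The Lie algebra map $Lie\,\alpha$ occurring in the diagram~(\ref{(3.0.2)}) is linear, hence algebraizes to a rigid analytic morphism $l=(Lie\,\alpha)^{an}\colon H^{1}(A,\mathcal{O})\otimes\mathbb{G}_{a}\to(\mathbb{G}_{a}^{2g})^{an}$. I would then define $\alpha^{an}|_{U_{0}}:=\exp\circ l\circ\log$ on an affinoid $U_{0}$ chosen — using the neighbourhood-basis property of Weierstrass subdomains (Proposition~2.2.2) and continuity of morphisms of affinoids (Proposition~2.2.3) — small enough that $\log$ restricts to an isomorphism of $U_{0}$ onto an affinoid contained in the domain of $l$ and that $l$ maps this affinoid into the polydisc $W_{0}^{2g}$. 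The result is a composition of rigid analytic maps of affinoids, hence rigid analytic, e.g.\ by Lemma~5.3.3 of~\cite{Bosch}.

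It remains to identify $\alpha^{an}|_{U_{0}}$ with the given map. A diagram chase of~(\ref{(3.0.2)}) on $\mathbb{C}_{p}$-points yields $\alpha(x)=\exp\bigl(Lie\,\alpha(\log x)\bigr)$ for $x$ in the relevant neighbourhood, so $\alpha^{an}|_{U_{0}}$ and $\alpha|_{U_{0}}$ agree on $\mathbb{C}_{p}$-points; since $X$ is a smooth projective curve, $Pic^{0}_{X/\overline{\mathbb{Q}_{p}}}$, and therefore $(Pic^{0}_{X/\overline{\mathbb{Q}_{p}}})^{an}$, is reduced, and Proposition~3.2.4 (equivalently Corollary~3.2.5) then forces $\alpha^{an}|_{U_{0}}=\exp\circ l\circ\log$ to be \emph{the} enhancement of $\alpha|_{U_{0}}$. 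I expect the delicate part to be precisely the shrinking step: one must arrange at once that $\log$ is invertible on the chosen affinoid, that the operator norm of the linear map $l$ is small enough for $l\circ\log$ to land inside the convergence polydisc of $\exp$ (possibly after a further contraction of $U_{0}$), and — for the uniqueness argument of Proposition~3.2.4 to be available — that the candidate is a morphism of \emph{affinoid} spaces, so that the maximum principle (Theorem~2.1.4) together with reducedness can be invoked. Once the local piece is in place, propagating it over translates and checking admissibility of the resulting covering are routine.
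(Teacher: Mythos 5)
Your proposal follows essentially the same route as the paper: reduce to a neighbourhood of the identity via the translation square~(\ref{(3.0.3)}), use Fargues' logarithm for the rigid analytic $p$-divisible group $(Pic^{0}_{X/\overline{\mathbb{Q}_{p}}})^{an}$ and the classical $p$-adic logarithm on $(\mathbb{G}_{m}^{2g})^{an}$, define the local piece as $\exp\circ l\circ\log$ on a Weierstrass-domain affinoid obtained by the same shrinking argument, and identify it with $\alpha$ via the maximum principle and reducedness (Proposition~3.2.4, Corollary~3.2.5). The only differences are cosmetic — you make explicit the quasi-compactness and overlap-agreement points in the gluing step, and you flag the same affinoid-morphism caveat that the paper builds into Corollary~3.2.5.
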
   
   
\section{Higgs bundle case}
 In this section, we prove that the morphism from the moduli space of the Higgs bundles to the moduli space of representations is locally rigid analytic with small conditions.
 
 Let us begin with the construction of expontential map from $\mathbb{C}_{p}$ to $\mathbb{C}_{p}^{*}$. Observe that the series $\sum_{k \geq 0} \frac{x^{k}}{k!}$ converges precisely when $|x|<r_{p}=|p|^{\frac{1}{p-1}}$. By analogy with the classical case, we write 
 \[ exp(x)=\sum_{k \geq 0} \frac{x^{k}}{k!} \]
for the sums of these series whenever they converge. We deduce the property that $exp(x+y)=exp(x)\cdot exp(y)$ for $|x|<r_{p}$ and $|y|<r_{p}$. It is natural to try to construct a homomorphism
\[ f: \mathbb{C}_{p} \longrightarrow \mathbb{C}_{p}^{*} \]
extending the exponential defined above by a series expansion. If such an extension exists, when $x\in \mathbb{C}_{p}$ we can choose a high power $p^{n}$ of $p$ so that 
$p^{n}x \in B_{<r_{p}}$ and then
\[ f(x)^{p^{n}}=f(p^{n}x)=exp(p^{n}x). \]
In other words, $f(x)$ has to be a $p^{n}$th root of $exp(p^{n}x)$ in the algebraically closed field $\mathbb{C}_{p}$. This can be done in a coherent way, thus furnishing a continuation of the expontential homomorphism.

 Denote the maximal ideal $\{x \in \mathbb{C}_{p}: |x|<1 \}$ of the local ring $\{ x \in \mathbb{C}_{p}: |x| \leq 1 \}$ by $M_{p}$. 
 \begin{proposition}(Proposition on p.258 \cite{Robert})
 There is a continuous homomorphism $Exp: \mathbb{C}_{p} \longrightarrow 1+M_{p}$ extending the exponential mapping, originally defined only on the ball $B_{<r_{p}} \subset \mathbb{C}_{p}$.
 \end{proposition}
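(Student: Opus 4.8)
The plan is to reinterpret the statement as a splitting problem for the $p$-adic logarithm. First I would recall the standard facts about $\log$: the series $\log(1+m)=\sum_{k\ge1}(-1)^{k-1}m^{k}/k$ converges for every $m\in M_{p}$ and defines a group homomorphism $\log\colon 1+M_{p}\to\mathbb{C}_{p}$ which is surjective (since $1+M_{p}$ is $p$-divisible and $\log$ already surjects onto $B_{<r_{p}}$ from $U_{1}$) and whose kernel is precisely the group $\mu_{p^{\infty}}$ of roots of unity of $p$-power order; moreover $\exp$ and $\log$ are mutually inverse isomorphisms between $B_{<r_{p}}$ and the open subgroup $U_{1}:=\{u\in\mathbb{C}_{p}:|u-1|<r_{p}\}$ of $1+M_{p}$, and for every nontrivial $p$-power root of unity $\zeta$ one has $|\zeta-1|\ge r_{p}$, whence $\mu_{p^{\infty}}\cap U_{1}=\{1\}$. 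In this language $\exp|_{B_{<r_{p}}}$ is a continuous section of $\log$ over the open subgroup $B_{<r_{p}}\subset\mathbb{C}_{p}$, and the goal becomes: extend it to a continuous section over all of $\mathbb{C}_{p}$.

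Next I would observe that producing a group homomorphism $Exp\colon\mathbb{C}_{p}\to 1+M_{p}$ with $Exp|_{B_{<r_{p}}}=\exp$ is \emph{equivalent} to producing such a section. Indeed, if $Exp$ is a homomorphism extending $\exp$, then for $x\in\mathbb{C}_{p}$ and $n\gg0$ we have $p^{n}x\in B_{<r_{p}}$, hence $p^{n}\cdot\log\big(Exp(x)\big)=\log\big(Exp(x)^{p^{n}}\big)=\log\big(\exp(p^{n}x)\big)=p^{n}x$, and since $(\mathbb{C}_{p},+)$ is torsion-free this forces $\log\circ Exp=\mathrm{id}$; the resulting relation $Exp(x)^{p^{n}}=\exp(p^{n}x)$ is exactly the coherent-$p^{n}$-th-root description anticipated in the paragraph preceding the statement. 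Conversely, a section $s$ of $\log$ extends $\exp$ if and only if $U_{1}\subseteq\mathrm{Im}(s)$.

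For existence I would exploit the short exact sequence of abelian groups
\[ 0\longrightarrow\mu_{p^{\infty}}\longrightarrow 1+M_{p}\xrightarrow{\ \log\ }\mathbb{C}_{p}\longrightarrow0 \]
together with the fact that $\mu_{p^{\infty}}\cong\mathbb{Q}_{p}/\mathbb{Z}_{p}$ is a divisible, hence injective, abelian group. Because $\mu_{p^{\infty}}\cap U_{1}=\{1\}$, the composite $\mu_{p^{\infty}}\hookrightarrow 1+M_{p}\twoheadrightarrow(1+M_{p})/U_{1}$ is injective, so by injectivity the identity of $\mu_{p^{\infty}}$ extends along it to a homomorphism $(1+M_{p})/U_{1}\to\mu_{p^{\infty}}$; composing with the projection yields a retraction $\rho\colon 1+M_{p}\to\mu_{p^{\infty}}$ that is the identity on $\mu_{p^{\infty}}$ and trivial on $U_{1}$. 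Defining $Exp(x):=u\cdot\rho(u)^{-1}$ for any $u$ with $\log u=x$ is then well defined (two such choices of $u$ have ratio in $\mu_{p^{\infty}}=\ker\log$, on which $\rho$ is the identity), is a homomorphism, satisfies $\log\circ Exp=\mathrm{id}$, and for $x\in B_{<r_{p}}$ gives $Exp(x)=\exp(x)$ because $\exp(x)\in U_{1}$ and $\rho(\exp(x))=1$.

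Finally, continuity of $Exp$ is automatic: it is a homomorphism of topological groups which coincides with the continuous map $\exp$ on the neighborhood $B_{<r_{p}}$ of $0$, hence is continuous at $0$, hence continuous everywhere. The only point that requires genuine care is the existence step — arranging a splitting of $\log$ that simultaneously restricts to $\exp$ — and what makes it go through is precisely the elementary valuation estimate $|\zeta-1|\ge r_{p}$ for nontrivial $p$-power roots of unity, which forces $\mu_{p^{\infty}}\cap U_{1}=\{1\}$ and thereby allows the retraction to kill $U_{1}$. (Equivalently, one can make compatible choices of $p^{n}$-th roots of $\exp(p^{n}x)$ by a Zorn-type exhaustion of $\mathbb{C}_{p}$ by subgroups containing $B_{<r_{p}}$; the injectivity argument above is just a clean packaging of this.)
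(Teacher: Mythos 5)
Your proof is correct, and it is worth noting that the paper itself does not prove this proposition at all: it is quoted verbatim from Robert's book, and the only justification offered is the informal remark that $Exp(x)$ must be a $p^{n}$-th root of $\exp(p^{n}x)$ and that such roots ``can be chosen in a coherent way.'' What you have written is a rigorous packaging of exactly that coherence problem. Your route --- splitting the exact sequence $0\to\mu_{p^{\infty}}\to 1+M_{p}\xrightarrow{\log}\mathbb{C}_{p}\to 0$ by using the injectivity of the divisible group $\mu_{p^{\infty}}$ to build a retraction $\rho$ that is trivial on $U_{1}$ --- is sound at every step: the surjectivity of $\log$ via $p$-divisibility of $1+M_{p}$, the identification of $\ker\log$ with $\mu_{p^{\infty}}$, the valuation estimate $|\zeta-1|\ge r_{p}$ forcing $\mu_{p^{\infty}}\cap U_{1}=\{1\}$, the well-definedness of $Exp(x)=u\cdot\rho(u)^{-1}$, and the automatic continuity of a homomorphism that agrees with $\exp$ on a neighborhood of $0$ all check out. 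A marginally shorter variant (closer to what Robert actually does) is to observe that $1+M_{p}$ is itself divisible, hence injective as an abelian group, so that $\exp\colon B_{<r_{p}}\to 1+M_{p}$ extends directly along the inclusion $B_{<r_{p}}\hookrightarrow\mathbb{C}_{p}$; your continuity argument then applies unchanged. Your version costs one extra step (the retraction) but buys the explicit relation $\log\circ Exp=\mathrm{id}$ as part of the construction rather than as an afterthought, and isolates precisely where the estimate $|\zeta-1|\ge r_{p}$ enters. Either way, the non-canonicity of $Exp$ (noted in the paper's Remark) is visible in your argument as the non-uniqueness of the injective extension $\tilde{\rho}$.
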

 
 \begin{remark}
 (1) The definition of $Exp$ is indeed not canonical. The possibility of extending the exponential to the whole $\mathbb{C}_{p}$ had been shown by \cite{Durix}.
 
 (2) The diffference between $exp$ and $Exp$ is simply the domain of definition, which can be summarized in the following commutative diagram 
 \[\xymatrix{
 B_{<r_{p}} \ar@{^{(}->}[d] \ar[r]^{exp} & 1+B_{<r_{p}}\ar@{^{(}->}[d]\\
 \mathbb{C}_{p} \ar[r]^{Exp} & 1+M_{p}
 }\]
 \end{remark}

 \subsection{Higgs field gives rise to representations}
  Now we present the construction of the map
  \[ \Gamma(X,\Omega_{X}^{1}) \otimes_{\overline{\mathbb{Q}_{p}}} \mathbb{C}_{p}(-1) \longrightarrow Hom(\pi_{1}^{ab}(X),\mathbb{C}_{p}^{*}) \]
  
  Before the construction, we remark that there is no Galois action on the Tate twist $\Gamma(X,\Omega_{X}^{1}) \otimes_{\overline{\mathbb{Q}_{p}}} \mathbb{C}_{p}(-1)$. The above map is the exponential of a $\mathbb{C}_{p}$-linear map into the Lie algebra of the group of representations. More precisely, the $\mathbb{C}_{p}$-linear map is 
  \[ \Gamma(X,\Omega_{X}^{1}) \otimes_{\overline{\mathbb{Q}_{p}}} \mathbb{C}_{p}(-1) \longrightarrow (\Gamma(X,\Omega_{X}^{1}) \otimes \mathbb{C}_{p}(-1)) \oplus (H^{1}(X,\mathcal{O}_{X}) \otimes \mathbb{C}_{p}) \] 
where the first component is the identity, and the second one depends on the choice of lifting of $X$ to $A_{2}(V)$\footnote{The definition of $A_{2}(V)$ is defined as follows: Let $V$ be a complete DVR with perfect residue field $k$ of char $p>0$ and fraction field $K$ of char $0$, then the Fontaine's ring $A_{inf}(V)$ surjects onto the $p$-adic completion $\widehat{\overline{V}}$ and the kernel is a principal ideal with the generator $\xi$. We define $A_{2}(V)$ to be $A_{inf}(V)/(\xi^{2})$.} by \cite{Faltings 2005} (line 16-17 of p.861). Using the Hodge-Tate decomposition, we have the isomorphism
\[ H^{1}_{\acute{e}t}(X,\overline{\mathbb{Q}_{p}}) \otimes_{\overline{\mathbb{Q}_{p}}} \mathbb{C}_{p} \simeq  (\Gamma(X,\Omega_{X}^{1}) \otimes \mathbb{C}_{p}(-1)) \oplus (H^{1}(X,\mathcal{O}_{X}) \otimes \mathbb{C}_{p}).\]
Note that $Hom(\pi_{1}^{ab}(X),\mathbb{C}_{p}) \simeq H^{1}_{\acute{e}t}(X,\overline{\mathbb{Q}_{p}}) \otimes_{\overline{\mathbb{Q}_{p}}} \mathbb{C}_{p}$, then the exponential map from $\mathbb{C}_{p}$ to $\mathbb{C}_{p}^{*}$ induces the map
  \[ Hom(\pi_{1}^{ab}(X),\mathbb{C}_{p}) \longrightarrow Hom(\pi_{1}^{ab}(X),\mathbb{C}_{p}^{*}). \]
  Composing the maps together, we get the desired map.
  
  \subsubsection{Exp is not rigid analytic}
  \begin{lemma}
  The series $\sqrt[p]{1+x}=\sum_{k=0}^{\infty} \binom{\frac{1}{p}}{k}x^{k}$ converges when $|x|<|p|^{\frac{p}{p-1}}$.
  \end{lemma}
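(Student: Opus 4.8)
The plan is to bound the $p$-adic valuation of the binomial coefficients $\binom{1/p}{k}$ and deduce that the general term of the series tends to $0$, which over the complete non-archimedean field $\mathbb{C}_{p}$ is equivalent to convergence. Normalize the valuation by $v_{p}(p)=1$, so that the hypothesis $|x|<|p|^{\frac{p}{p-1}}$ reads $v_{p}(x)>\frac{p}{p-1}$, and start from
\[ \binom{1/p}{k}=\frac{1}{k!}\prod_{j=0}^{k-1}\left(\frac{1}{p}-j\right). \]

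First I would evaluate the numerator. For $j=0$ the factor is $1/p$, of valuation $-1$; for $1\le j\le k-1$ one has $\frac{1}{p}-j=\frac{1-jp}{p}$ with $1-jp\equiv 1\pmod p$, hence $1-jp$ is a unit and $v_{p}\!\left(\frac{1}{p}-j\right)=-1$ again. Thus $v_{p}\!\left(\prod_{j=0}^{k-1}\left(\tfrac{1}{p}-j\right)\right)=-k$. Next I would invoke Legendre's formula $v_{p}(k!)=\frac{k-s_{p}(k)}{p-1}$, where $s_{p}(k)$ is the sum of the base-$p$ digits of $k$; since $s_{p}(k)\ge 1$ for $k\ge 1$, this yields $v_{p}(k!)\le\frac{k-1}{p-1}$. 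Combining the two estimates,
\[ v_{p}\!\left(\binom{1/p}{k}x^{k}\right)=k\,v_{p}(x)-k-v_{p}(k!)\ \ge\ k\left(v_{p}(x)-1-\tfrac{1}{p-1}\right)+\tfrac{1}{p-1}=k\left(v_{p}(x)-\tfrac{p}{p-1}\right)+\tfrac{1}{p-1}. \]

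Under the hypothesis $v_{p}(x)>\frac{p}{p-1}$ the coefficient of $k$ is a fixed positive rational, so the right-hand side tends to $+\infty$ as $k\to\infty$; therefore $\binom{1/p}{k}x^{k}\to 0$ and the series converges, which proves the lemma. I do not anticipate a genuine obstacle: the only mildly delicate points are the valuation of the factors $\frac{1}{p}-j$, handled by the congruence $1-jp\equiv 1\pmod p$, and the linear-in-$k$ bound on $v_{p}(k!)$ coming from Legendre's formula. As an optional addendum one could test $k=p^{m}$, where $s_{p}(k)=1$, to see that the right-hand side of the displayed inequality is in fact an equality up to the bounded term $\tfrac{s_p(k)}{p-1}$, showing that the radius $|p|^{\frac{p}{p-1}}$ is optimal and that the series diverges on the boundary; this is not needed for the statement.
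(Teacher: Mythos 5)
Your proof is correct and follows essentially the same route as the paper: compute the valuation of $\binom{1/p}{k}$ using the fact that each factor $\frac{1}{p}-j$ has valuation $-1$ together with Legendre's formula $v_{p}(k!)=\frac{k-s_{p}(k)}{p-1}$, and conclude that the general term tends to $0$. If anything, your write-up is slightly more careful than the paper's, since you justify the valuation of the numerator via the congruence $1-jp\equiv 1\pmod{p}$, which the paper leaves implicit.
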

  \begin{proof}
  By strong triangle inequality, convergence of the series $\sum_{k=0}^{\infty} \binom{\frac{1}{p}}{k}x^{k}$ is equivalent to the condition $|\binom{\frac{1}{p}}{k}x^{k}| \rightarrow 0$. Note that the order $ord_{p}(k!)=\frac{k-S_{p}(k)}{p-1}$ where $S_{p}(k)$ is the sum of digits with respect to $p$-adic expansion of $k$, one has
  \[ |\binom{\frac{1}{p}}{k}x^{k}|=|p|^{k(ord_{p}(x)-\frac{p}{p-1})+\frac{S_{p}(k)}{p-1}}.\]
Since $S_{p}(k)=1$ when $k=p^{j}$ is a power of $p$, we have
\[ |\binom{\frac{1}{p}}{k}x^{k}| \rightarrow 0 \Longleftrightarrow k(ord_{p}(x)-\frac{p}{p-1}) \rightarrow \infty, \]
and this happens precisely when $ord_{p}(x)-\frac{p}{p-1} >0$, namely when 
\[ ord_{p}(x) >\frac{p}{p-1}, \quad |x|<|p|^{\frac{p}{p-1}}\]
as asserted.
  \end{proof}

  \begin{proposition}
  The exponential map $Exp$ is not rigid analytic for every choice of $Exp$.
  \end{proposition}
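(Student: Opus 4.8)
The plan is to argue by contradiction: assume $Exp\colon \mathbb{C}_p \longrightarrow 1+M_p$ is rigid analytic and derive a contradiction from the growth behaviour of the coefficients of a convergent power series on a ball. The key point is that $1+M_p$, as a subset of $\mathbb{G}_m^{an}$, is an increasing union of affinoid discs $\{|x-1|\le \rho\}$ for $\rho<1$, and a rigid analytic morphism $\mathbb{G}_a^{an}\longrightarrow \mathbb{G}_m^{an}$ whose image lies in $1+M_p$ would, on each affinoid disc $B_{\le \rho}\subset \mathbb{G}_a^{an}$ of radius $\rho\in |\mathbb{C}_p^\times|$, be given by a single restricted power series $F(x)=\sum_{k\ge 0} a_k x^k$ with $|a_k|\rho^k\to 0$. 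Since $Exp$ extends $exp$, this series must coincide with $\sum_{k\ge 0} x^k/k!$ on the small ball $B_{<r_p}$ where the latter converges, and by the identity principle for power series the coefficients are forced: $a_k = 1/k!$ for all $k$.

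First I would record that $|1/k!| = |p|^{-\mathrm{ord}_p(k!)} = |p|^{-(k-S_p(k))/(p-1)}$, exactly as in the computation in Lemma~4.1.3. Then, taking $k=p^j$ so that $S_p(k)=1$, one gets $|a_{p^j}| = |p|^{-(p^j-1)/(p-1)}$, and hence
\[
|a_{p^j}|\,\rho^{p^j} = |p|^{-(p^j-1)/(p-1)}\,\rho^{p^j} = \left(|p|^{-1/(p-1)}\rho\right)^{p^j}\cdot |p|^{1/(p-1)}.
\]
If $\rho > r_p = |p|^{1/(p-1)}$ then $|p|^{-1/(p-1)}\rho > 1$, so this quantity tends to $+\infty$ rather than $0$; therefore $F$ fails to be a restricted power series on $B_{\le \rho}$. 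Since $1+M_p$ contains discs of every radius $\rho<1$ and $1>r_p$, we can pick such a $\rho\in |\mathbb{C}_p^\times|$ with $r_p<\rho<1$, and the required affinoid restriction of $Exp$ does not exist. This contradicts rigid analyticity.

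I would phrase the final step carefully so as to handle the words ``for every choice of $Exp$'': the argument never uses any property of $Exp$ beyond the two facts that it is a continuous homomorphism and that it restricts to $exp$ on $B_{<r_p}$ (Proposition~4.0.1 and Remark~4.0.2(2)). Both hold for any admissible choice, so no choice of $Exp$ can be rigid analytic. One should also note that rigid analyticity of a morphism $\mathbb{G}_a^{an}\to\mathbb{G}_m^{an}$ really does force, on each affinoid $B_{\le\rho}$, a description by a global restricted power series in the coordinate: this follows because $\mathbb{G}_m^{an}$ is covered by affinoids $Sp\,\mathbb{C}_p\langle t,t^{-1}, p^{-1}(t-1)\rangle$-type algebras on which coordinate functions pull back to elements of $\mathcal{O}_{\mathbb{G}_a^{an}}(B_{\le\rho}) = \mathbb{C}_p\langle \rho^{-1}x\rangle$, i.e.\ to restricted power series.

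The main obstacle is the bookkeeping in the last paragraph: making precise that a hypothetical rigid analytic $Exp$ would be pinned down, disc by disc, by the Taylor coefficients $1/k!$ of the classical exponential, and that no competing power series is possible because of the identity principle on the overlap with $B_{<r_p}$. Once that rigidity is in place, the coefficient estimate $|1/p^j!|\,\rho^{p^j}=(\rho/r_p)^{p^j}|p|^{1/(p-1)}\to\infty$ for $r_p<\rho<1$ does all the work and is essentially the same computation already carried out in Lemma~4.1.3. Everything else — continuity, the disc exhaustion of $1+M_p$, the fact that $r_p<1$ — is routine.
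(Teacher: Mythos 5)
Your proof is correct, but it takes a genuinely different route from the paper's. The paper builds the extension inductively, setting $(Exp)_n=\sqrt[p]{1+x}\circ (Exp)_{n-1}\circ(\times p)$ on the balls $B_{<|p|^{-n}r_p}$, and uses Lemma~4.1.1 (the series $\sqrt[p]{1+x}$ converges only for $|x|<|p|^{p/(p-1)}$, which is strictly smaller than $r_p$) to argue that already $(Exp)_1$ fails to lie in a Tate algebra, then propagates this through the composition defining $(Exp)_n$. You instead argue globally by contradiction: a rigid analytic $Exp$ would restrict, on any source disc $B_{\le\rho}$ with $\rho\in|\mathbb{C}_p^{\times}|$, to an element of $\mathbb{C}_p\langle\rho^{-1}x\rangle$; the identity principle on the overlap with $B_{<r_p}$ pins its coefficients down to $1/k!$; and the estimate $|1/p^j!|\,\rho^{p^j}=(\rho/r_p)^{p^j}|p|^{1/(p-1)}\to\infty$ for $\rho>r_p$ contradicts the decay condition defining a restricted power series. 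Both arguments rest on the same arithmetic input $\mathrm{ord}_p(k!)=(k-S_p(k))/(p-1)$, but you apply it directly to the exponential series rather than to the binomial series for $\sqrt[p]{1+x}$. Your version is arguably tighter: it sidesteps the paper's somewhat informal step of transporting non-analyticity through the maps defining $(Exp)_n$, and it makes transparent why the conclusion holds for \emph{every} choice of $Exp$, since nothing beyond the homomorphism property and agreement with $exp$ on $B_{<r_p}$ is used. What the paper's construction buys in exchange is an explicit picture of how the extensions arise from iterated $p$-th roots, which motivates the non-canonicity recorded in Remark~4.0.2. One minor bookkeeping remark: the radius that matters is that of the source disc in $\mathbb{G}_a^{an}$, so the restriction $\rho<1$ you import from the target $1+M_p$ is not needed; any $\rho>r_p$ in $|\mathbb{C}_p^{\times}|$ already gives the contradiction.
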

  \begin{proof}
  The main idea of the proof is to illustrate that we can not extend $exp$ to $Exp$ rigid analytically by progressively increasing balls. 
  
  For any $x \in \mathbb{C}_{p}$, there is an integer $n$ such that $p^{n}x \in B_{<r_{p}}(0)$. Thus $Exp$ is defined on the ball $B_{<|p|^{-n}r_{p}}(0)$, where the integer $n$ depends on $x$. We make the construction inductively as follows.
   
  Since $Exp$ is a group homomorphism, we obtain $Exp(px)=Exp(x)^{p}$. Thus $Exp(x)=\sqrt[p]{Exp(px)}$. Set $exp=(Exp)_{0}$. Construct $(Exp)_{1}$ as the compositions of the maps
  \[ \xymatrix{B_{|p|^{-1}r_{p}} \ar[r]^{\times p} &B_{<r_{p}} \ar[r]^{exp}&1+B_{<r_{p}} \ar[r]^{\sqrt[p]{1+x}} &1+M_{p}}.\]
  Inductively, we can construct $(Exp)_{n}$ on the ball $B_{<|p|^{-n}r_{p}}(0)$ from $(Exp)_{n-1}$. We claim that $(Exp)_{1}$ is not rigid analytic. For $y \in B_{<|p|^{-1}r_{p}}(0)$, let $exp(py)=1+x$ for some $x \in B_{<r_{p}}(0)$. It implies that
  \[ |x|=|log(1+x)|=|py|<r_{p}.\]
But by the Lemma 4.1.1, $\sqrt[p]{1+x}$ converges for $|x|<|p|^{\frac{p}{p-1}}<r_{p}$. Thus $(Exp)_{1}$ is not in a certain Tate algebra $\mathbb{C}_{p} \langle x \rangle$. 
Note that the exponential map $Exp$ is locally analytic. In fact, fix $x_{0} \in \mathbb{C}_{p}$. For $x \in B_{<r_{p}}(x_{0})$, we obtain the local expansion 
\[Exp(x)=Exp(x_{0})exp(x-x_{0})=Exp(x_{0})\sum_{k=0}^{\infty} \frac{(x-x_{0})^{k}}{k!}\] as wanted. Thus for any extension $\varphi$ of $exp$ to a continuous group homomorphism $\mathbb{C}_{p} \longrightarrow (1+M_{p})^{\times}$, $\varphi$ is not rigid analytic.
It follows that $(Exp)_{1}$ is not rigid analytic as claimed. 

Construct $(Exp)_{2}$ as the compositions of the maps
  \begin{align}\label{(4.1.1)}
  {\xymatrix{B_{|p|^{-2}r_{p}} \ar[r]^{\times p} &B_{|p|^{-1}r_{p}} \ar[r]^{(Exp)_{1}} &1+M_{p} \ar[r]^{\sqrt[p]{1+x}} &\mathbb{C}_{p}^{*}}.}
  \end{align}
Since $(Exp)_{1}$ is not in a certain Tate algebra, we have $(Exp)_{2}$ is not in a certain Tate algebra by the above commutative diagram (\ref{(4.1.1)}). 
  
  By the construction of $(Exp)_{n}$, we can not extend $exp$ to $Exp$ rigid analytically. Hence $Exp$ is not rigid analytic.
  \end{proof}

  \subsubsection{Small condition}
  
  Since the exponential map $Exp$ is not rigid analytic as shown in Proposition 4.1.2, we would like to add small conditions, which is the restriction of $Exp$ on the open ball $\{x \in \mathbb{C}_{p}: |x|<r_{p} \}$. Under the small condition, the exponential map is rigid analytic.
  
  \begin{definition}
  The Higgs bundle $(E, \theta)$ on $X$ is called \textit{small} if $p^{\alpha} | \,\theta$ for some $\alpha>\frac{1}{p-1}$. 
  \end{definition}
  
  \begin{definition}
  The representation $\rho \in Hom(\pi_{1}^{ab}(X),\textbf{o}^{*})$ is called \textit{small} if $\rho(\gamma_{i}) \equiv 1 \,(mod\, p^{2\beta})$, where $\gamma_{1},\dots, \gamma_{2g}$ are any set of generators of $\pi_{1}^{ab}(X)$.   \end{definition}

 By Faltings' construction in \cite{Faltings 2005}, there is a one-to-one correspondence between small rank one Higgs bundles of degree $0$ and small representations. In the rest of the article, we denote small Higgs field by 
$(\Gamma(X,\Omega_{X}^{1}) \otimes \mathbb{C}_{p}(-1))_{small}$ and denote small representations by $Hom(\pi_{1}^{ab}(X), \textbf{o}^{*})_{small}$. The difference are the following: for small Higgs fields, consider the multiplication on Higgs fields by $p^{\alpha}$ for some $\alpha>\frac{1}{p-1}$. For small representations,  denote by $E^{2g}$ by the $2g$-dimensional closed unit polydisc. Let $Y=E^{2g} \cap (\mathbb{G}_{m}^{2g})^{an}$, note that the elements of $p^{2\beta}\textbf{o}$ are topologically nilpotent, it follows that the surjective map $\textbf{o} \longrightarrow \textbf{o}/p^{2\beta}\textbf{o}$ induces the reduction map $\pi: Y \longrightarrow \widetilde{Y}$ (for the definition of the reduction map, we refer to Section 7.1.5 of \cite{BGR}). Then $U=\pi^{-1}(\{1\})$ is a tube\footnote{The definition of the tube is defined as follows: Let $X$ be a rigid analytic space and $x$ is the point of $X$. Denote the image of $x$ under the reduction map $red: X\longrightarrow  \widetilde{X}$ by $\widetilde{x}$. Define the tube of $\widetilde{x}$ in $X$ to be $red^{-1}(\widetilde{x})$.} of 1 in $Y\subset (\mathbb{G}_{m}^{2g})^{an}$ and $(U,\mathcal{O}_{U})$ is a rigid analytic subspace of $(\mathbb{G}_{m}^{2g})^{an}$.
The $\textbf{o}$-points of $U$, denote by $U(\textbf{o})$, is the small representations of $Hom(\pi_{1}^{ab}(X), \textbf{o}^{*})$. It follows that the $p$-adic Simpson correspondence is defined on the open subspace $(U,\mathcal{O}_{U})$ under small conditions. In the following, we fix the parameters $\alpha$ and $\beta$ as defined in Definition 4.1.3 and Definition 4.1.4 respectively. The relationship between $\alpha$ and $\beta$ is $2\beta=\alpha+\frac{1}{p-1}$.
   
  \begin{proposition}
  One can enhance the map
   \[ \Gamma(X, \Omega_{X}^{1}) \otimes \mathbb{C}_{p}(-1) \longrightarrow Hom(\pi_{1}^{ab}(X), \mathbb{C}_{p}^{*})\] to the morphism of rigid analytic spaces
   \[ \Gamma(X,\Omega_{X}^{1}) \otimes \overline{\mathbb{Q}_{p}}(-1) \otimes \mathbb{G}_{a} \longrightarrow (\mathbb{G}_{m}^{2g})^{an}. \]
  corresponding to the small representations of $Hom(\pi_{1}^{ab}(X), \mathbb{C}_{p}^{*})$.   
  \end{proposition}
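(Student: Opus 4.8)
The plan is to factor the map through its linear-algebraic part and the exponential, to observe that the former is automatically rigid analytic, and to confine the latter --- which by Proposition 4.1.2 is \emph{not} rigid analytic globally --- inside the locus cut out by the smallness condition, where it is given by a convergent power series. By the construction recalled before the statement, the map $\Gamma(X,\Omega_X^1)\otimes_{\overline{\mathbb{Q}_p}}\mathbb{C}_p(-1)\to\Hom(\pi_1^{ab}(X),\mathbb{C}_p^{*})$ is the composite of: (a) the $\mathbb{C}_p$-linear map $\iota$ into $\bigl(\Gamma(X,\Omega_X^1)\otimes\mathbb{C}_p(-1)\bigr)\oplus\bigl(H^1(X,\mathcal{O}_X)\otimes\mathbb{C}_p\bigr)$, which is the identity in the first coordinate and the lift-dependent map in the second; (b) the Hodge--Tate isomorphism identifying the target with $H^1_{et}(X,\overline{\mathbb{Q}_p})\otimes\mathbb{C}_p\simeq\Hom(\pi_1^{ab}(X),\mathbb{C}_p)$; and (c) the exponential $\Hom(\pi_1^{ab}(X),\mathbb{C}_p)\to\Hom(\pi_1^{ab}(X),\mathbb{C}_p^{*})$. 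Fixing generators $\gamma_1,\dots,\gamma_{2g}$ of $\pi_1^{ab}(X)$ identifies $\Hom(\pi_1^{ab}(X),\mathbb{C}_p)$ with $(\mathbb{G}_a^{2g})^{an}(\mathbb{C}_p)$, $\Hom(\pi_1^{ab}(X),\textbf{o}^{*})$ with $(\mathbb{G}_m^{2g})^{an}(\textbf{o})$, and $\Gamma(X,\Omega_X^1)\otimes\overline{\mathbb{Q}_p}(-1)\otimes\mathbb{G}_a$ with the analytification of affine $g$-space; since (a) and (b) carry $\mathbb{C}_p$-coefficients, everything below is read over $\mathbb{C}_p$.

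Steps (a) and (b) are $\mathbb{C}_p$-linear maps of finite-dimensional vector spaces, hence morphisms of affine spaces, hence --- after analytification (Section 2.3) --- morphisms of rigid analytic spaces; their composite is a linear, in particular rigid analytic, morphism $j\colon\Gamma(X,\Omega_X^1)\otimes\mathbb{G}_a\to(\mathbb{G}_a^{2g})^{an}$. For (c), componentwise $exp(x)=\sum_{k\ge 0}x^k/k!$ converges for $|x|<r_p=|p|^{\frac{1}{p-1}}$, and on each closed ball $\{\,|x|\le|p|^s\,\}$ with rational $s>\tfrac{1}{p-1}$ one has $|x^k/k!|\le|p|^{\,k(s-\frac{1}{p-1})+\frac{S_p(k)}{p-1}}\to 0$ (using $ord_p(k!)=\frac{k-S_p(k)}{p-1}$), so the power series lies in the corresponding Tate algebra and defines a rigid analytic morphism $\{\,|x|\le|p|^s\,\}\to\{\,|y-1|\le|p|^s\,\}\subset\mathbb{G}_m^{an}$; its $2g$-fold product is a rigid analytic morphism into $(\mathbb{G}_m^{2g})^{an}$.

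Now I restrict to the small locus. Choosing $\alpha\in\mathbb{Q}$ with $\alpha>\tfrac{1}{p-1}$ as in Definition 4.1.3, the small Higgs fields form the affinoid polydisc $D_\alpha=\{\,|z_i|\le|p|^{\alpha}\,\}$ inside $\Gamma(X,\Omega_X^1)\otimes\mathbb{G}_a$. With respect to the integral lattices implicit in Definitions 4.1.3--4.1.4, the linear identification $j$ sends $D_\alpha$ into the polydisc $\{\,|z_i|\le|p|^{\alpha+1/(p-1)}\,\}=\{\,|z_i|\le|p|^{2\beta}\,\}$ of $(\mathbb{G}_a^{2g})^{an}$; this is precisely the normalisation $2\beta=\alpha+\tfrac{1}{p-1}$ recorded after Definition 4.1.4, the shift $\tfrac{1}{p-1}$ being the valuation of the cyclotomic period appearing in the Hodge--Tate comparison. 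Since $2\beta>\tfrac{2}{p-1}>\tfrac{1}{p-1}$ we have $|p|^{2\beta}<r_p$, so by the previous paragraph we may post-compose with $exp$ on this polydisc. The composite
\[ D_\alpha\ \xrightarrow{\ j\ }\ \{\,|z_i|\le|p|^{2\beta}\,\}\ \xrightarrow{\ exp\ }\ (\mathbb{G}_m^{2g})^{an} \]
is then a composition of rigid analytic morphisms, hence rigid analytic; it agrees with the map built before the statement on $\textbf{o}$-points (indeed on all points of $D_\alpha$, by Remark 4.0.2); and its image lies in $\{\,|u_i-1|\le|p|^{2\beta}\,\}$, which is the tube $U$ of $1$ defined above whose $\textbf{o}$-points are exactly the small representations. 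This is the asserted rigid analytic morphism corresponding to the small representations; on the complement of $D_\alpha$ one still has the map of sets furnished by $Exp$, but only as a locally analytic (not rigid analytic) map, which is why the statement is phrased as ``locally rigid analytic''.

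The only genuine obstacle is Proposition 4.1.2: the exponential is not rigid analytic, which is what forces the smallness hypothesis in the first place. The remaining point requiring care is the valuation bookkeeping through the Hodge--Tate comparison --- it is what legitimises the relation $2\beta=\alpha+\tfrac{1}{p-1}$ and hence guarantees simultaneously that $j(D_\alpha)$ lands inside the convergence disc of $exp$ and inside the small-representation locus; everything else (analytification of linear maps, composition of rigid analytic morphisms, the $k!$-estimate) is routine.
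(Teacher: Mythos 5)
Your proof follows essentially the same route as the paper's: decompose the map into the $\mathbb{C}_p$-linear part (rigid analytic by analytification of a linear/algebraic map) followed by the exponential, and use the smallness condition to confine the exponential to a polydisc strictly inside its radius of convergence, where $Exp=exp$ is given by a power series lying in the relevant Tate algebra. Your version is somewhat more careful than the paper's --- you verify Tate-algebra membership of $exp$ on closed sub-balls via the $ord_p(k!)$ estimate and track the $\tfrac{1}{p-1}$ shift through the Hodge--Tate comparison that underlies the relation $2\beta=\alpha+\tfrac{1}{p-1}$, where the paper simply asserts $p^{\alpha}\mid\beta(\theta)$ --- but the argument is the same in substance.
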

  \begin{proof}
  For a small Higgs field $\theta \in \Gamma(X, \Omega_{X}^{1}) \otimes \mathbb{C}_{p}(-1)$, we have $p^{\alpha} | \,\theta$ for some $\alpha>\frac{1}{p-1}$. Denote the $\mathbb{C}_{p}$-linear map
  \[ \Gamma(X,\Omega_{X}^{1}) \otimes_{\overline{\mathbb{Q}_{p}}} \mathbb{C}_{p}(-1) \longrightarrow (\Gamma(X,\Omega_{X}^{1}) \otimes \mathbb{C}_{p}(-1)) \oplus (H^{1}(X,\mathcal{O}_{X}) \otimes \mathbb{C}_{p}) \]
  by $\beta$. Thus $p^{\alpha}|\,\beta(\theta)$ and $|\beta(\theta)|<r_{p}=|p|^{\frac{1}{p-1}}$. Since the difference between $exp$ and $Exp$ is simply domain of definition, it follows that $Exp(\beta(\theta))=exp(\beta(\theta))$. Note that $\beta$ is linear and defined by polynomials, thus algebraic, then $\beta$ is rigid analytic via the rigid analytification functor. Hence $exp\circ \beta$ is rigid analytic as desired. 
  \end{proof}
  
  \begin{remark}
  Because of the smallness condition, the non-uniqueness of the extension $Exp$ is not a problem in the proof of Proposition 4.1.5.
  \end{remark}
   
 \subsection{Conclusion}
  As our main result, we show that the $p$-adic Simpson correspondence for line bundles is rigid analytic under suitable conditions.
  
  By Theorem 3.2.6 and Proposition 4.1.5, it turns out that
  \[ f: (Pic^{0}_{X/\overline{\mathbb{Q}_{p}}})^{an} \times (\Gamma(X,\Omega_{X}^{1})\otimes \overline{\mathbb{Q}_{p}}(-1) \otimes \mathbb{G}_{a}) \longrightarrow U \subset (\mathbb{G}_{m}^{2g})^{an} \]
  is rigid analytic. On the point level, the map
  \[ Pic^{0}_{X/\overline{\mathbb{Q}_{p}}}(\mathbb{C}_{p}) \times (\Gamma(X,\Omega_{X}^{1}) \otimes \mathbb{C}_{p}(-1))_{small} \longrightarrow Hom(\pi_{1}^{ab}(X),\mathbb{C}_{p}^{*})_{small} \] mapping $(L,\theta)$ to $\rho$ is a homeomorphism by \cite{Faltings 2005}. 
  
  \begin{theorem}
  Under some mild conditions in Theorem 3.2.6 and Proposition 4.1.5 and the assumption that \[f: (Pic^{0}_{X/\overline{\mathbb{Q}_{p}}})^{an} \times (\Gamma(X,\Omega_{X}^{1})\otimes \overline{\mathbb{Q}_{p}}(-1) \otimes \mathbb{G}_{a}) \longrightarrow (\mathbb{G}_{m}^{2g})^{an}\] is an affinoid morphism,  we deduce that the $p$-adic Simpson correspondence for line bundles
    \[ Hom(\pi_{1}^{ab}(X), \mathbb{C}_{p}^{*})_{small} \longrightarrow Pic^{0}_{X/\overline{\mathbb{Q}_{p}}}(\mathbb{C}_{p}) \times (\Gamma(X,\Omega_{X}^{1}) \otimes \mathbb{C}_{p}(-1))_{small} \] can be enhanced to the rigid analytic morphism defined on the open rigid analytic subspace $U \subset (\mathbb{G}_{m}^{2g})^{an}$  
    \[ (\mathbb{G}_{m}^{2g})^{an} \longrightarrow (Pic^{0}_{X/\overline{\mathbb{Q}_{p}}})^{an} \times (\Gamma(X, \Omega_{X}^{1}) \otimes \overline{\mathbb{Q}_{p}}(-1) \otimes \mathbb{G}_{a}). \]
  \end{theorem}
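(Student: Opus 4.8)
The plan is to realize the desired morphism as the inverse of the rigid analytic map $f$ in the statement, and to prove that $f$ is an isomorphism onto the open subspace $U$ by combining Theorem 3.2.6, Proposition 4.1.5 and Faltings' bijection with an infinitesimal argument. First I would assemble $f$ explicitly. Writing $m$ for the multiplication morphism of the rigid analytic group $(\mathbb{G}_{m}^{2g})^{an}$, one has $f=m\circ\big(\alpha^{an}\times(exp\circ\beta)\big)$, where $\alpha^{an}$ is the morphism of Theorem 3.2.6 and $exp\circ\beta$ that of Proposition 4.1.5; both factors being rigid analytic, so is $f$, and $f$ is moreover a homomorphism of rigid analytic groups, since $\alpha^{an}$ is a homomorphism, $\beta$ is linear, and $exp$ is the exponential homomorphism. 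Using the smallness conditions of Definitions 4.1.3 and 4.1.4 together with the relation $2\beta=\alpha+\tfrac{1}{p-1}$, one checks that on the pertinent small domain $f$ takes values in the tube $U$, and that on $\textbf{o}$-points it induces exactly the correspondence $(L,\theta)\mapsto\rho$, a homeomorphism onto $U(\textbf{o})$ by \cite{Faltings 2005}. So $f$ is already a homeomorphism on points; it remains to upgrade it to an isomorphism of rigid analytic spaces, for then $g:=f^{-1}\colon U\longrightarrow (Pic^{0}_{X/\overline{\mathbb{Q}_{p}}})^{an}\times(\Gamma(X,\Omega_{X}^{1})\otimes\overline{\mathbb{Q}_{p}}(-1)\otimes\mathbb{G}_{a})$ is rigid analytic, and its $\textbf{o}$-points recover the $p$-adic Simpson correspondence for line bundles.

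Invoking the hypothesis that $f$ is an affinoid morphism, I would choose an admissible affinoid covering $U=\bigcup_{i}Sp\,A_{i}$, set $Sp\,B_{i}=f^{-1}(Sp\,A_{i})$ with structural homomorphisms $\varphi_{i}^{*}\colon A_{i}\to B_{i}$, and reduce to showing each $\varphi_{i}^{*}$ is an isomorphism. Injectivity follows exactly as in Proposition 3.2.4: since $f$ is surjective on points, every $a\in\ker\varphi_{i}^{*}$ has $|a|_{sup}=0$ by the maximum principle (Theorem 2.1.4), so $a$ is nilpotent and hence zero, because $(\mathbb{G}_{m}^{2g})^{an}$ is smooth, hence reduced, hence so are $U$ and the $A_{i}$.

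For surjectivity I would pass to the infinitesimal level. The derivative of $f$ at the origin equals $Lie\,\alpha$ plus the differential of $exp\circ\beta$, the latter being the linear map $\beta$ itself since $exp$ has identity derivative at the origin. Under the Hodge--Tate decomposition $Hom(\pi_{1}^{ab}(X),\mathbb{C}_{p})\simeq(\Gamma(X,\Omega_{X}^{1})\otimes\mathbb{C}_{p}(-1))\oplus(H^{1}(X,\mathcal{O}_{X})\otimes\mathbb{C}_{p})$, this derivative is block triangular: the identity on the first summand (the first component of $\beta$) and the map $Lie\,\alpha\colon H^{1}(A,\mathcal{O})\otimes\mathbb{C}_{p}\to H^{1}(X,\mathcal{O}_{X})\otimes\mathbb{C}_{p}$ of diagram (\ref{(3.0.2)}) on the second. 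Since the period of a degree-zero line bundle lies in the $H^{1}(X,\mathcal{O}_{X})\otimes\mathbb{C}_{p}$-summand and $Lie\,\alpha$ is an isomorphism onto it --- the $g$-dimensional Picard and Higgs directions together spanning the $2g$-dimensional character space --- the derivative is an isomorphism. Since $f$ is a group homomorphism, translation propagates this to every point, so $f$ is \'{e}tale; an \'{e}tale morphism of rigid analytic spaces that is bijective on points is an isomorphism onto an open subspace, and being surjective onto $U$, it is an isomorphism $Sp\,B_{i}\to Sp\,A_{i}$ for each $i$. Hence $f$ is an isomorphism onto $U$, $g=f^{-1}$ is the sought rigid analytic morphism, and its $\textbf{o}$-points yield $Hom(\pi_{1}^{ab}(X),\mathbb{C}_{p}^{*})_{small}\to Pic^{0}_{X/\overline{\mathbb{Q}_{p}}}(\mathbb{C}_{p})\times(\Gamma(X,\Omega_{X}^{1})\otimes\mathbb{C}_{p}(-1))_{small}$.

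I expect the surjectivity of the $\varphi_{i}^{*}$ --- equivalently the openness of $f$ --- to be the main obstacle. The affinoid hypothesis is precisely what reduces it to a ring-theoretic statement, but the substantive input is the infinitesimal computation that $Lie\,\alpha$ identifies $H^{1}(A,\mathcal{O})\otimes\mathbb{C}_{p}$ with the $H^{1}(X,\mathcal{O}_{X})\otimes\mathbb{C}_{p}$-summand of the Hodge--Tate decomposition, so that the combined derivative of $f$ is invertible and the rigid analytic inverse function theorem applies. A more pedestrian alternative, in the spirit of the shrinking arguments of \S3.2 and not invoking \'{e}taleness, would be to use the continuity of Faltings' inverse to construct $\varphi_{i}^{*}$-preimages of the coordinate functions directly as convergent power series with controlled Gauss norms; this sidesteps \'{e}taleness but is technically heavier.
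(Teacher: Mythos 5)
Your proposal reaches the same conclusion but by a genuinely different, and substantially more complete, route than the paper. The paper's own proof is purely formal: it uses the affinoid hypothesis to reduce to a single homomorphism of affinoid algebras $\varphi\colon A\to B$ and then simply declares that $\varphi^{-1}\colon B\to A$ is again a homomorphism of affinoid algebras --- that is, it takes the bijectivity of $\varphi$ for granted, with Faltings' point-level bijection as the only implicit justification. You instead actually prove that $\varphi$ is an isomorphism: injectivity by the maximum-principle and reducedness argument of Proposition 3.2.4, and surjectivity by showing that the derivative of $f$ at the origin is invertible (block-triangular with respect to the Hodge--Tate decomposition, with the identity on the $\Gamma(X,\Omega_{X}^{1})\otimes\mathbb{C}_{p}(-1)$-summand and $Lie\,\alpha$ on the $H^{1}(X,\mathcal{O}_{X})\otimes\mathbb{C}_{p}$-summand), propagating invertibility to every point by translation in the rigid group structure, and concluding that $f$ is \'{e}tale and bijective, hence an isomorphism onto $U$. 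This is exactly the content that the paper's one-line inversion elides, so your argument supplies a proof of the key point rather than an assertion of it. Two inputs you invoke deserve explicit references in a write-up: the fact that $Lie\,\alpha$ is an isomorphism onto the $H^{1}(X,\mathcal{O}_{X})\otimes\mathbb{C}_{p}$-summand is the Hodge--Tate theorem of Deninger--Werner \cite{Deninger and Werner 2005b}, and the statement that an \'{e}tale, pointwise bijective morphism of reduced rigid spaces over $\mathbb{C}_{p}$ is an isomorphism is standard but not contained in \cite{Bosch}; alternatively, once the derivative is known to be invertible you can apply the rigid inverse function theorem on a small affinoid neighbourhood of the origin and then translate, which stays closer in spirit to the shrinking arguments of Section 3.2 and avoids any appeal to \'{e}taleness.
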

  \begin{proof}
  By the previous argument, $f$ is rigid analytic. It suffices to prove that
  \[ f^{-1}: (\mathbb{G}_{m}^{2g})^{an} \longrightarrow (Pic^{0}_{X/\overline{\mathbb{Q}_{p}}})^{an} \times (\Gamma(X,\Omega_{X}^{1}) \otimes \overline{\mathbb{Q}_{p}}(-1) \otimes \mathbb{G}_{a}) \]
  is rigid analytic. Since $f$ is an affinoid morphism, one can reduce to the affinoid case. Set $X=Sp B$ and $Y=Sp A$. Let $f: (X,\mathcal{O}_{X}) \longrightarrow (Y,\mathcal{O}_{Y})$ be the morphism of locally $G$-ringed spaces. Then there is a bijection between morphisms of affinoid spaces $X \longrightarrow Y$ and morphisms of locally $G$-ringed spaces $(X,\mathcal{O}_{X}) \longrightarrow (Y,\mathcal{O}_{Y})$. One can view $f:X \longrightarrow Y$ as the morphism of affinoid spaces. Note that the category of affinoid spaces is the opposite category of affinoid algebras. Let $\varphi: A \longrightarrow B$ be the associated morphism of affinoid algebras. It follows that $\varphi^{-1}: B \longrightarrow A$ is the morphism of affinoid algebras. Thus $f^{-1}$ is rigid analytic. 
  \end{proof}


\begin{thebibliography}{X-X00}
\addcontentsline{toc}{chapter}{Bibliography}
\bibliographystyle{plain}
\bibitem{Faltings 2005}G. Faltings, A $p$-adic Simpson correspondence, \textit{Advance in Mathematics} \textbf{198(2)} (2005), 847-862.
\bibitem{Simpson 1992}C.T. Simpson, Higgs bundles and local systems, \textit{Publ. Math. IH\'{E}S} \textbf{75} (1992), 5-95.
\bibitem{Simpson 1994}C.T. Simpson, Moduli of representations of the fundamental group of a smooth projective variety \uppercase\expandafter{\romannumeral 1}, \textit{Publ. Math. IH\'{E}S} \textbf{79} (1994), 47-129.
\bibitem{Simpson}C.T. Simpson, Moduli of representations of the fundamental group of a smooth projective variety   \uppercase\expandafter{\romannumeral 2}, \textit{Publ. Math. IH\'{E}S} \textbf{80} (1994), 5-79.
\bibitem{Deninger and Werner 2005a}C. Deninger and A. Werner, Vector bundles on $p$-adic curves and parallel transport, \textit{Ann. Sci. \'{E}c. Norm. Sup\'{e}r.} (4) \textbf{38} (2005), 553-597.
\bibitem{Heuer}Ben Heuer, Diamantine Picard functors of rigid spaces. arxiv preprint 2013.16557v1, 2021.
\bibitem{Bosch}S. Bosch, \textit{Lectures on formal and rigid geometry}, Lecture Notes in Math. 2105, Springer, 2014.
\bibitem{Gerard van der Geer and Ben Moonen}Gerard van der Geer and Ben Moonen, \textit{Abelian varieties}, Book in preparation 71, 2007. Available at http://page.mi.fu-berlin.de/elenalavanda/bmoonen.pdf.
\bibitem{Deninger and Werner 2005b}C. Deninger and A. Werner, \textit{Line bundles and $p$-adic characters}, Number Fields and Function Fields--Two Parallel Worlds, Progress in Mathematics, vol. 239, Birkhauser Boston, 2005.  
\bibitem{Fargues}L. Fargues, Groupes analytiques rigides $p$-divisibles, \textit{Mathematische Annalen} \textbf{374} (2019), 723-791.
\bibitem{Bergamaschi}F. Bergamaschi, \textit{Abelian varieties and $p$-divisible groups.} Available at https://www.math.mcgill.ca/darmon/courses/12-13/sem-iovita/Bergamaschi-Seminarelovita.pdf.
\bibitem{Robert}A. M. Robert, \textit{A course in $p$-adic analysis}, Graduate Text in Math. 198, Springer-Verlag, New York, 2000.
\bibitem{Durix}Marie-Claude Durix, Prolongement de la function exponentielle en dehors de son disque de convergence, \textit{S\'{e}minaire Delange-Pisot-Poitou}:1966/67, \textit{Th\'{e}orie des Nombres}, Fasc.\textbf{1}, Exp.\textbf{1}, 12pp.  (Secr\'{e}tariat math\'{e}matique, Paris, 1968).
\bibitem{BGR}S. Bosch, U.G\"{u}ntzer and R. Remmert, \textit{Non-Archimedean analysis, a systematic approach to rigid analytic geometry}, volume 261 of Grundlehren der Mathematischen Wissenschaften, Springer-Verlag, Berlin, 1984.
\end{thebibliography}
\end{document}